\newtheorem{theorem}{Theorem}[section]
\newtheorem{proposition}[theorem]{Proposition}
\newtheorem{corollary}[theorem]{Corollary}
\def\half{ \frac{1}{2}}
\def\D{\partial}
\def\R{{\mathbb R}}
\def\nint{\mathop{\diagup\kern-13.0pt\int}}
\def\bas{\begin{align*}}
\def\eas{\end{align*}}
\def\bi{\begin{itemize}}
\def\ei{\end{itemize}}
\def\emph#1{{\it #1}}
\def\FF{{\mathcal F}}
\def\eps{{\epsilon}}
\def\OO{{\mathcal O}}
\theoremstyle{definition}
\numberwithin{equation}{section}
\begin{document}

\title{Improved low regularity theory for gravity-capillary waves}
\author{Albert Ai}
\address{Department of Mathematics, University of Wisconsin, Madison}
\email{aai@math.wisc.edu}

\keywords{gravity-capillary waves, low regularity, Strichartz estimates, microlocal analysis}
\subjclass[2020]{76B03, 76B15}

\begin{abstract}
This article concerns the Cauchy problem for the gravity-capillary water waves system in general dimensions. We establish local well-posedness for initial data in $H^s$, with $s > \frac{d}{2} + 2 - \mu$, with $\mu = \frac{3}{14}$ and $\mu = \frac37$ in the cases $d = 1$ and $d \geq 2$ respectively. This represents an improvement over the the state-of-the-art low regularity theory in $d \geq 2$ dimensions.
\end{abstract}

\maketitle
\addtocontents{toc}{\protect\setcounter{tocdepth}{1}}
\tableofcontents

\section{Introduction}

We consider the Cauchy problem for the gravity-capillary water waves system in the context of an incompressible, irrotational fluid flow. 

Let $\Omega$ denote a time-dependent fluid domain contained in a fixed domain $\OO$, located between a free surface and a fixed bottom. Precisely,
\[
\Omega = \{(t, x, y) \in [0, 1] \times \OO \ ; \ y < \eta(t, x)\},
\]
where $\OO \subseteq \R^d \times \R$ is a given connected open set, $x \in \R^d$ represents the horizontal spatial coordinate, and $y \in \R$ represents the vertical spatial coordinate parallel to the force of gravity. 

We assume the free surface
$$\Sigma = \{(t, x, y) \in [0, 1] \times \R^d \times \R: y = \eta(t, x)\}$$
is separated from the fixed bottom $\Gamma = \D\Omega \backslash \Sigma$ by a curved strip of depth $h > 0$:
\begin{align}\label{width}
\{(x, y) \in \R^d \times \R: \eta(t, x) - h < y < \eta(t, x) \} \subseteq \OO.
\end{align}
This assumption is satisfied, for instance, in the case of infinite depth.

We consider an incompressible, irrotational fluid flow in the presence of gravity. In this setting the fluid velocity field $v$ may be given by $\nabla_{x, y} \phi$ where $\phi: \Omega \rightarrow \R$ is a harmonic velocity potential,
$$\Delta_{x, y} \phi = 0.$$
The water waves system is then given by Euler's equation in the presence of gravity, a kinematic condition requiring that fluid particles at the interface remain at the interface, the balance of forces at the interface, and a solid bottom constraint:
\begin{equation}\label{euler}
\begin{cases}
\begin{aligned}
&\D_t \phi + \dfrac{1}{2} |\nabla_{x, y}\phi|^2 + P + gy = 0 \qquad &\text{in } \Omega, \\
&\D_t \eta = \D_y \phi - \nabla_x \eta \cdot \nabla_x \phi \qquad &\text{on } \Sigma, \\
&P = \kappa H(\eta) \qquad &\text{on } \Sigma, \\
&\D_\nu \phi = 0 \qquad &\text{on } \Gamma.
\end{aligned}
\end{cases}
\end{equation}
Here, $g > 0$ is acceleration due to gravity, $\nu$ is the normal to $\Gamma$, $P$ is the pressure, $\kappa > 0$ is the coefficient of surface tension, and
\[
H(\eta) = \text{div}_x \left( \frac{\nabla_x \eta}{\sqrt{1 + |\nabla_x \eta|^2}} \right)
\]
is the mean curvature of the interface.

The free boundary problem (\ref{euler}) reduces to a system of equations on the free surface. Following Zakharov \cite{zak} and Craig-Sulem \cite{CSS}, we have unknowns $(\eta, \psi)$ where $\eta$ is the vertical position of the fluid surface as before, and
$$\psi(t, x) = \phi(t, x, \eta(t, x))$$
is the velocity potential $\phi$ restricted to the surface. Then (henceforth, $\nabla = \nabla_x$):
\begin{equation}\label{zak}
\begin{cases}
\D_t \eta - G(\eta) \psi = 0 ,\\
\displaystyle \D_t \psi + g\eta - \kappa H(\eta) + \half |\nabla \psi|^2 - \half \frac{(\nabla \eta \cdot \nabla \psi + G(\eta) \psi)^2}{1 + |\nabla \eta|^2} = 0,
\end{cases}
\end{equation}
where $G(\eta)$ is the Dirichlet to Neumann map with boundary $\eta$:
\[(G(\eta)\psi)(t, x) = \sqrt{1 + |\nabla \eta|^2} (\D_n \phi) |_{y = \eta(t, x)}.\]
In the sequel we will fix $g = \kappa = 1$ without losing generality.

\

For the well-posedness of \eqref{zak} in the context of smooth data, see Nalimov \cite{n}, Shinbrot \cite{Shinbrot}, Yoshihara \cite{y, y2}, Beyer-Gunther \cite{BG}, Coutand-Shkoller \cite{CS}, and Ming-Zhang \cite{MZ}.

In the context of well-posedness with limited Sobolev regularity, the scaling-critical space 
\[
(\eta, \psi) \in H^{s + \half} \times H^s, \qquad s = \frac{d}{2} + \frac32
\]
serves as a natural lower threshold above which we can hope to obtain well-posedness, although due to the quasilinear nature of the problem, the actual threshold is likely to be higher. The first result in this direction was established by Alazard-Burq-Zuily \cite{ABZcap}, who proved well-posedness in Sobolev spaces half a derivative above the scaling critical threshold, 
\[
s > \frac{d}{2} + 2.
\]
A key step for the proof of the energy estimates underlying this result was the use of a paradifferential reduction of \eqref{zak},
\begin{equation}\label{introparadiff}
\D_t u + T_V \cdot \nabla u + i T_\gamma u = f,
\end{equation}
where $\gamma$ is a real symbol of order $3/2$ We will review this reduction in greater detail in Section~\ref{lwp}.

To bring the well-posedness theory closer to the scaling threshold, we observe that \eqref{introparadiff} is dispersive of order 3/2, and in particular we expect solutions to satisfy Strichartz estimates. For the constant coefficient counterpart to \eqref{introparadiff},
\[
(\D_t + i|D_x|^{\frac32}) u = f,
\]
these estimates take the form
\begin{equation}\label{strichartz}
\|u\|_{L^p([0, T]; W^{s - \frac{d}{2} - \mu, \infty})} \lesssim \|u(0)\|_{H^s} + \|f\|_{L^1([0, T]; H^s)}
\end{equation}
with
\begin{equation}\label{strich-numerology}
\begin{aligned}
p = 4, \quad \mu &= \frac{3}{8}, \qquad \text{if} \quad d = 1,\\
p = 2, \quad \mu &< \frac{3}{4}, \qquad \text{if} \quad d \geq 2.
\end{aligned}
\end{equation}

It was first observed in the context of the nonlinear wave equation that using Strichartz estimates in the place of Sobolev embedding within the energy method reduces the amount of Sobolev regularity required for local well-posedness; see the works of Bahouri-Chemin \cite{BC0, bc}, Tataru \cite{T, T2, T3}, Klainerman-Rodnianski \cite{KR}, and Smith-Tataru \cite{ST}. 

More recently, Strichartz estimates have also been applied to the low regularity theory in the context of the water waves system. In the pure gravity water waves setting, where the surface tension coefficient $\kappa = 0$, see the works of Alazard-Burq-Zuily \cite{abz-str} and of the author \cite{A, A1}. 

\

In the current article, we are concerned with the application of Strichartz estimates to the low regularity well-posedness of the gravity-capillary system \eqref{zak} with nonvanishing $\kappa$. Similar to the previous programs of research involving the nonlinear wave and pure gravity equations, the key difficulty with this approach is proving Strichartz estimates for equations with limited regularity coefficients; in this case, \eqref{introparadiff}. Depending on the regularity of these coefficients, the Strichartz estimate \eqref{strichartz} may only hold on microlocal time scales of length less than 1. This results in a loss relative to the constant coefficient gain \eqref{strich-numerology}.

In the setting of 2D water waves with $d = 1$, Christianson-Hur-Staffilani \cite{CHS} proved Strichartz estimates for smooth solutions (requiring $s \geq 15$), while Alazard-Burq-Zuily \cite{ABZcapS} established the estimates for solutions at the regularity produced by the energy method, half a derivative above scaling. Both of these results incurred a 1/8 derivative loss relative to \eqref{strich-numerology}. Precisely, they showed that solutions
\[
(\eta, \psi) \in C^0([0, T]; H^{s + \half}(\R) \times H^s (\R)), \qquad s > \frac{d}{2} + 2,
\]
to \eqref{zak} satisfy
\[
(\eta, \psi) \in L^4([0, T]; B^{s + \half - \mu}_{\infty, 2}(\R) \times B^{s - \mu}_{\infty, 2}(\R)), \qquad \mu = \frac14,
\]
where $B^\sigma_{\infty, 2}$ denotes the usual Besov space.

In this result, the authors took advantage of the one dimensional setting to apply a paracompositional change of variables, flattening the coefficient of the order $3/2$ dispersive term. Nguyen \cite{Nsharp} later refined this paracompositional approach to establish a refined estimate using H\"older control norms, and thus was able to establish well-posedness for data with the same Strichartz gain,
\[
s > \frac{d}{2} + 2 - \mu, \qquad \mu = \frac14.
\]

\

In the general dimensional setting, one does not have access to the para-compositional change of variables. As a result, to extend the low regularity theory below the energy threshold, the Strichartz estimates need to be proven for equations with even lower coefficient regularity. This was successfully performed in the general dimensional setting by de Poyferre-Nguyen \cite{PN0, PN}, by conducting the analysis at microlocal scales even narrower than that of \cite{ABZcapS}. However, this induces a further loss in the Strichartz estimates, and accordingly yields a narrower gain in the low regularity theory. Precisely, they proved an estimate of the form
\[
\|u\|_{L^p([0, T]; W^{s - \frac{d}{2} - \mu, \infty})} \lesssim \|u_0\|_{H^s} + \|f\|_{L^p([0, T]; H^s)}
\]
with
\begin{equation}\label{pn-result}
\begin{aligned}
p = 4, \quad \mu &< \frac{3}{20}, \qquad \text{if} \quad d = 1,\\
p > 2, \quad \mu &< \frac{3}{10}, \qquad \text{if} \quad d \geq 2,
\end{aligned}
\end{equation}
with the corresponding well-posedness at regularities
\[
s > \frac{d}{2} + 2 - \mu.
\]
This provides a weaker result than the approach using the para-compositional change of variables, but is able to extend to all dimensions $d \geq 2$.

\

In the current article, we improve the well-posedness result in general dimensions $d \geq 2$: 

\begin{theorem}\label{t:main}
Let $d \geq 1$,
\begin{equation*}
\begin{aligned}
p = 4, \quad \mu &< \frac{3}{14}, \qquad \text{if} \quad d = 1,\\
p > 2, \quad \mu &< \frac{3}{7}, \qquad \text{if} \quad d \geq 2,
\end{aligned}
\end{equation*}
and 
\[
s > \frac{d}{2} + 2 - \mu, \qquad 2 < r < s - \frac{d}{2} + \mu.
\]
Consider initial data $(\eta_0, \psi_0) \in H^{s + \half}(\R^d) \times H^s(\R^d)$ satisfying the positive depth condition
\[
\{(x, y) \in \R^d \times \R: \eta_0(x) - h < y < \eta_0(x) \} \subseteq \OO.
\]

Then there exists $T > 0$ such that the system \eqref{zak} with initial data $(\eta_0, \psi_0)$ has a unique solution $(\eta, \psi) \in C([0, T]; H^{s + \half} \times H^s)$, which satisfies
\begin{enumerate}[i)]
\item the positive depth condition \eqref{width} with $h/2$ in place of $h$,

\item the Strichartz estimate 

\[
(\eta, \psi) \in L^p([0, T]; W^{r + \half, \infty} \times W^{r, \infty}),
\]

\item and continuity of the solution map 
\[
(\eta_0, \psi_0) \mapsto (\eta, \psi), \qquad H^{s + \half} \times H^s \rightarrow C([0, T]; H^{s + \half} \times H^s).
\]

\end{enumerate}
\end{theorem}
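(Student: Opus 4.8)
The plan is to follow the by-now-standard architecture for low-regularity well-posedness of quasilinear dispersive equations, but with the improved Strichartz numerology as the new input. First I would recall the paradifferential reduction \eqref{introparadiff} of \cite{ABZcap}, which conjugates \eqref{zak} into a scalar equation $\D_t u + T_V\cdot\nabla u + iT_\gamma u = f$ with $\gamma$ a real elliptic symbol of order $3/2$, together with the companion energy estimate showing that the $H^s$ norm of $(\eta,\psi)$ is controlled by $\|u\|_{H^s}$ and vice versa, modulo lower-order terms whose size is governed by a H\"older-type control norm of the solution. The key point is that these energy estimates do not close on $H^s$ data by themselves when $s < \frac{d}{2}+2$: Sobolev embedding is too lossy to bound the relevant nonlinear terms, and one must instead feed in a Strichartz bound of the form \eqref{strichartz} with the exponents from Theorem~\ref{t:main} to control the control norm $\|(\eta,\psi)\|_{L^p W^{r+1/2,\infty}\times L^p W^{r,\infty}}$.

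The heart of the paper, and the step I expect to be the main obstacle, is proving the Strichartz estimate itself for the paradifferential flow \eqref{introparadiff} with coefficients of only limited (Sobolev, hence H\"older) regularity. I would proceed via the usual $TT^*$/wave-packet or FBI-transform parametrix construction, but the crucial quantitative issue is the length of the microlocal time interval on which a frequency-$2^k$ piece of the solution admits a parametrix with no derivative loss: if coefficients are $C^\rho$ and the symbol has order $3/2$, the natural time scale is $2^{-k\sigma}$ for some $\sigma$ depending on $\rho$, and summing the $\sim 2^{k\sigma}T$ subintervals converts the clean gain \eqref{strich-numerology} into a reduced gain $\mu$. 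The improvement over \cite{PN} to $\mu<\frac37$ (resp.\ $\frac3{14}$) should come from a sharper choice of this time scale — exploiting a finer decomposition of the coefficient (low-frequency smooth part handled by the parametrix, high-frequency rough part treated perturbatively), together with a bootstrap in which the Strichartz norm one is proving is itself allowed to enter the control norm at a subcritical power. Concretely I would: (i) regularize $V$ and $\gamma$ at frequency $2^{k\delta}$, (ii) build the frequency-localized parametrix for the smoothed flow on time intervals of the optimized length, (iii) estimate the parametrix by dispersive decay plus stationary phase and sum over intervals and over $k$, and (iv) control the error from the regularization using the energy estimates and the not-yet-closed Strichartz norm, choosing $\delta$ to balance the parametrix loss against the regularization error. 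Getting the two losses to meet at exactly $\mu=\frac37$ is the delicate bookkeeping that drives the whole argument.

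With the Strichartz estimate in hand for the linearized/paradifferential flow, the remaining steps are comparatively routine. I would: (a) establish a priori estimates for smooth solutions by running the energy estimate and the Strichartz estimate together in a bootstrap, obtaining a uniform lifespan $T>0$ and uniform bounds in $C([0,T];H^{s+1/2}\times H^s)\cap L^p([0,T];W^{r+1/2,\infty}\times W^{r,\infty})$ depending only on the size of the initial data; (b) prove existence by regularizing the data, solving the regularized problems (which have smooth solutions by the classical theory cited — Nalimov, Yosihara, Ming--Zhang, etc.), and passing to the limit using the uniform bounds together with a compactness argument, checking the positive depth condition \eqref{width} with $h/2$ persists on $[0,T]$ after possibly shrinking $T$; (c) prove uniqueness and continuous dependence by deriving a Lipschitz-type estimate for the difference of two solutions in a weaker norm (typically $H^{s-1/2}\times H^{s-1}$ or $L^2$-based), again using the paradifferential structure, and then upgrading to continuity in the strong topology via the Bona--Smith method — regularize, use continuity of the solution map at the regularized level, and use the uniform Strichartz bounds to control the error terms that a pure energy argument cannot absorb. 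The positive depth condition and the elliptic estimates for $G(\eta)$ underlying all of this are standard once $\eta\in C([0,T];H^{s+1/2})$ with $s+\frac12>\frac d2+1$, which holds throughout.

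\endprf
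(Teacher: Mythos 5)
Your high-level architecture is correct and matches the paper: paradifferential reduction $\D_t u + T_V\cdot\nabla u + iT_\gamma u = f$ from \cite{ABZcap}/\cite{PN0} with the source $f$ treated perturbatively via \eqref{inhomog-est}, an FBI-transform/wave-packet parametrix for the frequency-localized flow (the paper follows \cite{tataru2004phase} and \cite{marzuola2008wave}), truncation of the symbol at an intermediate frequency $\lambda^\sigma$ to trade regularity for a derivative loss, and closure of the estimates by feeding the Strichartz norm of $(\eta,\psi)$ back into the energy estimates and the $f$-bound. Your steps (i), (iii), (iv) correspond to identifiable pieces of Theorem~\ref{t:var-strich} and Corollary~\ref{c:var-strich-rough}.

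However, you misidentify the actual source of the quantitative improvement over \cite{PN}. You describe the microlocal time scale as a fixed, frequency-dependent quantity ``$2^{-k\sigma}$'' and suggest the gain comes from ``a sharper choice of this time scale,'' a finer low/high frequency split of the coefficient, and a bootstrap at subcritical power. None of these is the new mechanism. The frequency truncation you describe is already present in \cite{PN}, and the bootstrap structure where the $L^1_t$ H\"older control norm of the coefficient enters the Strichartz estimate is also standard in \cite{ABZcapS, PN}. The genuinely new ingredient, stated explicitly in the introduction and implemented in Step 3 of Theorem~\ref{t:var-strich}, is that the unit time interval is partitioned into subintervals of \emph{variable} length, maximal subject to the two constraints \eqref{inhomog-split} and \eqref{symbol-split}: each piece is allowed to carry a fixed budget of the $L^1_t$ size of the rough coefficient (and of the source), rather than a fixed fraction of elapsed time. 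The count $k\approx\mu$ of subintervals is then established via the elementary inequality $c+c^{-1}\gtrsim 1$ summed over the maximal pieces, rather than by simply dividing $T$ by a fixed step. Because the paper works throughout with $L^1_t$ (not $L^\infty_t$) bounds on the $\dot C^r_x$ norms of the symbol, intervals where the coefficient is locally small are not wasted, and this is precisely what tightens $\mu$ from $3/10$ to $3/7$ (and $3/20$ to $3/14$ when $d=1$). Your proposal, as written, would reproduce the equal-length partition of \cite{PN} and hence only their exponents; the variable-length partitioning borrowed from Tataru \cite{T3} and \cite{A} is the missing idea.

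Two smaller gaps: you do not anticipate the $L^1S^{m,(k)}_{1,\delta}(\lambda)$ / $S^m_1(\lambda)$ symbol-class formalism and the rescaling $\tilde a = Ta(Tt, T^{1/m}x, T^{-1/m}\xi)$ that reduces lifespan $T$ to lifespan $1$ (Steps 2 and 4 of Theorem~\ref{t:var-strich}); and the dispersive estimate in Proposition~\ref{p:dispersive} is closed via a nondegeneracy condition on $\det \D_\xi^2 a$ and the flow equation \eqref{Xflow} for $\D_\xi x^t$ rather than by ``stationary phase'' on the parametrix kernel, though this is a presentational difference rather than a substantive one.
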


We observe that the well-posedness result of Theorem~\ref{t:main} still exhibits a loss of derivatives relative to the Strichartz estimates \eqref{strich-numerology} obtained in the constant coefficient setting. This reflects the fact that we still reduce the dispersive analysis to microlocal time scales by partitioning the unit time interval.

However, in contrast to previous works in the capillary-gravity setting, which partition the time interval into equal-length intervals, we refine the partitioning procedure to account for the varying size of the background metric. This approach was first applied in the context of the wave equation by Tataru \cite{T3}, and in the context of pure gravity water waves by the author \cite{A}.

\

We outline the paper as follows. In Section~\ref{s:parametrix}, we review a wave packet parametrix construction via a phase space transform. We apply this construction in Section~\ref{s:strich} to establish dispersive and Strichartz estimates on unit time intervals for evolution equations under general symbol classes with sufficient regularity. In Section~\ref{s:gen-strich}, we generalize these Strichartz estimates to settings with lower regularity symbols, but with a loss of derivatives. Finally, in Section~\ref{lwp}, we return to the capillary-gravity system, and recall the paradifferential reduction of Alazard-Burq-Zuily and de Poyferre-Nguyen. We show that this paradifferential setting is amenable to the low regularity in which the Strichartz estimates of Section~\ref{s:gen-strich} apply, concluding the proof of Theorem~\ref{t:main}.

\subsection{Acknowledgements} 

The author was supported by the National Science Foundation under Grant No. DMS-2220519 and the RTG in Analysis and Partial Differential equations grant DMS-2037851. The author was also supported by the National Science Foundation under Grant No. DMS-1928930 while in residence at the Simons Laufer Mathematical Sciences Institute (formerly MSRI) in Berkeley, California, during the summer of 2023.

The author would like to thank Mihaela Ifrim for many helpful discussions.

\section{The parametrix construction}\label{s:parametrix}

In this section we consider a general evolution equation of the form
\begin{equation}\label{general}
\begin{cases}
(i\D_t + a^w(t, x, D))u = 0, \qquad &\text{in } (0, T) \times \R^d, \\
u(0) = u_0, \qquad &\text{on } \R^d,
\end{cases}
\end{equation}
where $a(t, x, \xi)$ is a real symbol continuous in $t$ and smooth with respect to $x$ and $\xi$. Here $a^w$ denotes a Weyl-quantized operator. In particular, it is self-adjoint and thus generates an isometric evolution $S(t, s)$ on $L^2(\R^d)$. 

We outline the construction of a phase space representation of the fundamental solution for (\ref{general}) using the FBI transform, following \cite{tataru2004phase}. For a more thorough discussion of the properties of the FBI transform, the reader may refer to the comprehensive exposition of Delort \cite{FBId}.

The FBI transform,
\[(Tf)(x, \xi) = 2^{-\frac{d}{2}} \pi^{-\frac{3d}{4}} \int e^{-\half(x - y)^2}e^{i\xi(x - y)} f(y)\, dy,\]
is an isometry from $L^2(\R^d)$ to the phase space $L^2(\R^{2d})$ with an inversion formula
\[f(y) = (T^*Tf)(y) = 2^{-\frac{d}{2}} \pi^{-\frac{3d}{4}} \int e^{-\half(x - y)^2}e^{-i\xi(x - y)} (Tf)(x, \xi) \, dx d\xi.\]
We can use the FBI transform to quantify the phase space localization of $S(t, s)$ around the Hamilton flow corresponding to \eqref{general},
\begin{equation}\begin{cases}\label{hamilton}
\dot{x} = a_\xi(t, x, \xi), \\
\dot{\xi} = -a_x(t, x, \xi).
\end{cases}\end{equation}

More precisely, let 
$$(x^t, \xi^t) = (x^t(x, \xi), \xi^t(x, \xi))$$
denote the solution to (\ref{hamilton}), with initial data $(x, \xi)$ at time $t = 0$. In addition, let $\chi(t, s)$ denote the family of transformations on the phase space $L^2(\R^{2d})$ corresponding to (\ref{hamilton}),
$$\chi(t, s)(x^s, \xi^s) = (x^t, \xi^t).$$
It was shown in \cite{tataru2004phase} that for the class of symbols $a \in S_{0, 0}^{0, (k)}$ defined by
\begin{equation}\label{S2}
|\D_x^\alpha \D_\xi^\beta a(t, x, \xi)| \leq c_{\alpha ,\beta}, \qquad |\alpha| + |\beta| \geq k,
\end{equation}
the flow satisfies the following properties:
\begin{theorem}\label{t:bilipthm}
Let $a(t, x, \xi) \in S_{0, 0}^{0, (2)}$. Then
\begin{enumerate}
\item The Hamilton flow \eqref{hamilton} is well-defined and bilipschitz.

\item The kernel $\tilde{K}(t,s)$ of the phase space operator $TS(t, s)T^*$ decays rapidly away from the graph of the Hamilton flow,
\[
|\tilde{K}(t, x, \xi, s, y, \eta)| \lesssim (1 + |(x, \xi) - \chi(t, s)(y, \eta)|)^{-N}.
\]
\end{enumerate}
\end{theorem}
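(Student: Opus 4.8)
The plan is to handle the two assertions separately; the first is elementary and the second is the substantial one.

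For assertion (1), the hypothesis $a \in S_{0,0}^{0,(2)}$ says precisely that all $(x,\xi)$-derivatives of $a$ of order $\geq 2$ are bounded uniformly in $t$, so the Hamiltonian vector field $V(t,x,\xi) := (a_\xi, -a_x)$ is globally Lipschitz in $(x,\xi)$ with at most linear growth. Picard--Lindel\"of then furnishes a unique global solution of \eqref{hamilton}, and for two solutions $z_1, z_2$ we have $\frac{d}{dt}|z_1 - z_2| \leq |V(t,z_1) - V(t,z_2)| \leq C|z_1 - z_2|$, hence $|z_1(t) - z_2(t)| \leq e^{C|t-s|}|z_1(s) - z_2(s)|$; running the flow of $-V$ backward gives the matching lower bound, so $\chi(t,s)$ is bilipschitz with constant $e^{C|t-s|}$.

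For assertion (2), the starting point is that $\tilde S(t,s) := TS(t,s)T^*$ solves, on the phase space $L^2(\R^{2d})$, the transport equation $\D_t \tilde S(t,s) = i\tilde A(t)\tilde S(t,s)$ with $\tilde S(s,s) = TT^*$, where $\tilde A(t) := Ta^w(t)T^*$; this uses only $T^*T = I$ and that $S(t,s)$ is generated by $ia^w$. The crux is a symbol calculus for the conjugated operator $\tilde A(t)$, restricted to the range $\mathcal H$ of $T$. Using the intertwining identities for $T$ — which turn multiplication by $y_j$ and the derivative $D_{y_j}$ into the first-order phase-space operators $x_j + i\D_{\xi_j}$ and $\xi_j + \D_{\xi_j}$ (for this normalization of $T$, modulo the $\bar\D$-operator cutting out $\mathcal H$) — together with a Taylor expansion of $a(t,\cdot)$ about the diagonal, one shows: modulo the ideal $\mathcal R$ of operators whose kernels are $O_N((1+|z-w|)^{-N})$ for every $N$ (a class stable under composition and adjoints) and modulo bounded operators, $i\tilde A(t)$ coincides with $ia(t,x,\xi)$ plus the first-order vector field $\mathcal L(t) = V(t,\cdot)\cdot\nabla_{(x,\xi)}$ generating the lift of the Hamilton flow \eqref{hamilton} to $L^2(\R^{2d})$. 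It is exactly here that $a \in S_{0,0}^{0,(2)}$ is essential: the Taylor coefficients of order $\geq 2$ are bounded, so the resulting error operators genuinely fall into $\mathcal R$ plus bounded operators, with no growth in phase space.

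Granting this decomposition, the kernel bound follows from a weighted energy estimate closed by Gronwall's inequality. Fix $N$ and the flow-adapted weight $w_{t,s}(z,z') := 1 + |z - \chi(t,s)z'|$, and estimate $\D_t$ of $\sup_{z'}\|w_{t,s}^N\, \tilde K(t,\cdot\,;s,z')\|_{L^2_z}$ together with its $z \leftrightarrow z'$ transpose: differentiating the weight produces $\D_t\chi(t,s) = V(t,\chi(t,s))$, which is cancelled precisely by the transport term $\mathcal L(t)$ of the decomposition; the multiplication term $ia(t,x,\xi)$ is purely imaginary and drops out of the modulus; and the residual bounded and $\mathcal R$ contributions are controlled — using the bilipschitz bound of (1), which makes $w_{t,s}$ and $w_{s,t}$ comparable so weights may be moved past $\mathcal R$ at the cost of a fixed power — by lower powers of $w_{t,s}$ times the same quantity. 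Gronwall then yields a bound uniform in $t,s,z,z'$. Finally, since $\tilde S(t,s)$ maps into $\mathcal H$, on which functions satisfy elliptic $\bar\D$-estimates, commuting the phase-space derivatives $\D_{(x,\xi)}$ through the same scheme and applying Sobolev embedding on $\R^{2d}$ upgrades the weighted $L^2$ bound to the pointwise estimate $|\tilde K(t,x,\xi,s,y,\eta)| \lesssim (1+|(x,\xi)-\chi(t,s)(y,\eta)|)^{-N}$. The main obstacle is the symbol-calculus step: establishing the $ia + \mathcal L + \mathcal R + (\text{bounded})$ decomposition of $Ta^wT^*$ under only the $S_{0,0}^{0,(2)}$ hypothesis — identifying which part of $\tilde A$ generates the flow and confirming all remaining terms land in the good ideal — is where the bounded-Hessian assumption does its work, and is the technical content carried out in \cite{tataru2004phase}; the Gronwall propagation and the elliptic upgrade to pointwise bounds are comparatively routine once it is available.
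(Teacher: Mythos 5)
The paper does not prove Theorem~\ref{t:bilipthm}: it simply quotes the result from Tataru's \emph{Phase space transforms and microlocal analysis} \cite{tataru2004phase}, which is also where Theorem~\ref{t:repformula} comes from. So there is no internal proof to compare against; the relevant question is whether your sketch is a faithful reconstruction of the cited argument, and in broad strokes it is. Part (1) is handled exactly as you say: under $S_{0,0}^{0,(2)}$ the Hamiltonian vector field is globally Lipschitz with at most linear growth, so Picard--Lindel\"of plus the forward and backward Gronwall bounds give a bilipschitz flow with constant $e^{C|t-s|}$. For part (2), the strategy in \cite{tataru2004phase} is indeed to conjugate the generator by $T$, Taylor-expand $a$ about the phase-space base point so that the order-$0$ and order-$1$ pieces produce (respectively) the unimodular multiplier $ia(t,x,\xi)$ and the Hamiltonian transport $\mathcal{L}(t)$ on the range of $T$, with all order-$\ge 2$ Taylor remainders landing in bounded plus rapidly-decaying-kernel operators precisely because $a\in S_{0,0}^{0,(2)}$; and then to close a weighted Gronwall estimate in which the flow-adapted weight's time derivative is cancelled against $\mathcal{L}(t)$ up to the Lipschitz increment $V(t,z)-V(t,\chi(t,s)z')=O(|z-\chi(t,s)z'|)$. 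You have identified these steps correctly and also correctly flagged the symbol-calculus decomposition as the genuine technical content that is carried out in \cite{tataru2004phase} rather than reproduced here.

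Two small remarks. First, the final upgrade from the weighted $L^2$ bound to the pointwise kernel estimate is more cleanly obtained from the reproducing-kernel property of the range of $T$ (functions $v$ in $\operatorname{ran} T$ satisfy $v(z) = \int K_T(z,w)v(w)\,dw$ with a Gaussian kernel $K_T$, so $\langle z - z_0\rangle^N|v(z)|\lesssim \|\langle\cdot - z_0\rangle^N v\|_{L^2}$) than from $\bar\D$-elliptic estimates plus Sobolev embedding in $\R^{2d}$; the latter works but requires commuting derivatives through the flow. Second, the explicit intertwining formulas you quote are off by signs and by a $\D_x$ term that the CR relation $\D_{x_j} = i\D_{\xi_j}+i\xi_j$ allows you to trade away on $\operatorname{ran} T$, but since you explicitly disclaim the normalization this is harmless. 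Neither remark affects the soundness of the outline.
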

Then we have the following phase space representation for solutions to (\ref{general}), as a consequence of \cite[Theorem 4]{tataru2004phase}:
\begin{theorem}\label{t:repformula}
Let $a(t, x, \xi) \in S_{0, 0}^{0, (2)}$. Then the kernel $K(t, s)$ of the evolution operator $S(t, s)$ for $i\D_t + a^w$ can be represented in the form 
$$K(t, y, s, \tilde{y}) = \int e^{-\half(\tilde{y} - x^s)^2} e^{-i\xi^s(\tilde{y} - x^s)} e^{i(\psi(t, x, \xi) - \psi(s, x, \xi))} e^{i\xi^t(y - x^t)} G(t, s, x, \xi, y) \, dx d\xi ,$$
where the function $G$ satisfies
$$|(x^t - y)^\gamma \D_x^\alpha \D_\xi^\beta \D_y^\nu G(t, s, x, \xi, y)| \lesssim c_{\gamma, \alpha, \beta, \nu}.$$
\end{theorem}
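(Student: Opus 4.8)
The plan is to recall and adapt the wave packet parametrix construction of \cite{tataru2004phase}, from which the representation formula is extracted. The starting point is the FBI inversion $T^*T = I$, which resolves any $f$ as a superposition of coherent states $\phi_{x,\xi}(y) := c_d\, e^{-\frac12(y - x)^2} e^{i\xi(y - x)}$, $c_d = 2^{-d/2}\pi^{-3d/4}$, so that $S(t,s)f = \int \langle f, \phi_{x,\xi}\rangle\, S(t,s)\phi_{x,\xi}\, dx\, d\xi$. Reparametrizing the phase space variables through the Hamilton flow, which preserves $dx\, d\xi$ since it is symplectic, this becomes an integral over the evolved packets $\phi_{x^s,\xi^s}$; the guiding heuristic, that classical evolution governs quantum evolution, predicts $S(t,s)\phi_{x^s,\xi^s} \approx e^{i(\psi(t,x,\xi) - \psi(s,x,\xi))}\phi_{x^t,\xi^t}$, with $\psi(t,x,\xi) = \int_0^t \bigl(\xi^\tau \cdot a_\xi(\tau, x^\tau,\xi^\tau) - a(\tau,x^\tau,\xi^\tau)\bigr)\, d\tau$ the classical action accumulated along the flow (signs according to the conventions in \eqref{general} and \eqref{hamilton}). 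Substituting this prediction back through the inversion formula, and absorbing the Gaussian profile $e^{-\frac12(y - x^t)^2}$ and the normalizing constants into an amplitude $G$, reproduces exactly the stated form of $K(t,s)$, with $G$ equal to its leading value $G_0 := c_d^2\, e^{-\frac12(y - x^t)^2}$.

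To turn this heuristic into an exact identity, I would start from an ansatz $K_{\mathrm{app}}$ of the form in the statement and compute $(i\D_t + a^w(t,y,D))K_{\mathrm{app}}$. The packet factor $e^{i\xi^t(y - x^t)}$ has local frequency $\xi^t$, so to leading order $a^w$ acts on it by multiplication by $a(t,y,\xi^t)$; Taylor expanding about $y = x^t$, the Hamilton equations \eqref{hamilton} are precisely what cancels the term in the error linear in $y - x^t$, and the transport equation satisfied by $\psi$ cancels the term of order zero. One then refines $G = G_0 + G_1 + \dots + G_N$, choosing each $G_j$ to solve a transport equation along the flow whose source is the error produced by $G_0, \dots, G_{j-1}$; because the amplitude in the final statement need only obey fixed symbol bounds and carries \emph{no} decay in $\xi$, finitely many corrections suffice to make the remainder $R := (i\D_t + a^w)K_{\mathrm{app}}$ regularizing of whatever finite order in $\xi$ is needed. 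The required bounds $|(x^t - y)^\gamma \D_x^\alpha \D_\xi^\beta \D_y^\nu G| \lesssim c_{\gamma,\alpha,\beta,\nu}$ then rest on two points: the flow $(x^t,\xi^t)$ and all of its $(x,\xi)$-derivatives are bounded on the fixed time interval — differentiating \eqref{hamilton} yields linear variational systems whose coefficients are derivatives of $a$ of order $\geq 2$, hence bounded by the hypothesis $a \in S_{0,0}^{0,(2)}$ — and, given this, each transport equation has a bounded, Gaussian-localized solution.

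Finally, to pass from $K_{\mathrm{app}}$ to the true kernel $K$, note that $K_{\mathrm{app}}(s,\cdot,s,\tilde y) = \delta(\cdot - \tilde y)$ (which reduces after the flow change of variables to a Gaussian computation recovering the delta), while $(i\D_t + a^w)K = 0$; hence Duhamel's formula gives $K = K_{\mathrm{app}} + i\int_s^t S(t,\tau)\, R(\tau,\cdot,s,\tilde y)\, d\tau$. Since $R$ is regularizing to high order in $\xi$ and $S(t,\tau)$ enjoys the phase space localization of Theorem~\ref{t:bilipthm}(2), the correction term is again of the stated form with an amplitude satisfying the required bounds — concretely, applying $T$ in the output variable, invoking that the kernel of $TS(t,\tau)T^*$ concentrates near the graph of $\chi(t,\tau)$, and changing variables by the flow put it back into the parametrix form. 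I expect the principal obstacle to lie in the second step: one must organize the asymptotic expansion of $G$ so that every successive error term remains Gaussian-localized in $y - x^t$ while simultaneously gaining regularity in $\xi$, which is exactly the point at which the FBI/wave-packet symbol calculus and the structural hypothesis $a \in S_{0,0}^{0,(2)}$ enter essentially; the remaining ingredients are direct citations from \cite{tataru2004phase} or routine ODE estimates for the flow and the transport equations.
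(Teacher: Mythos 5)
The paper does not prove this statement: it is quoted directly as a consequence of \cite[Theorem 4]{tataru2004phase}, with the only surrounding commentary being that \cite{marzuola2008wave} extended the formula to the weaker class $a \in S^{(2)}L_\chi^1$. So the appropriate blind answer here is to recognize the theorem as imported and to sketch the construction from the cited source, which your proposal does. Its skeleton is right: resolve the data through $T^*T = I$, evolve coherent states along the Hamilton flow with the accumulated classical action as phase, build the amplitude $G$ by transport along the flow in a Gaussian-weighted calculus, and recover the exact kernel by a Duhamel step combined with the phase space concentration estimate of Theorem~\ref{t:bilipthm}(2).

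One technical point is worth correcting, and you partly anticipate it. The claim that finitely many amplitude corrections render the remainder $R$ ``regularizing of whatever finite order in $\xi$ is needed'' mislocates the gain in this calculus: $S_{0,0}^{0,(2)}$ is scale-invariant in $\xi$ and encodes no $\xi$-decay, so the iteration does not produce powers of $\langle\xi\rangle^{-1}$. What the Taylor expansion about $y = x^t$ actually buys is extra polynomial factors in $y - x^t$ paired with derivatives $\D_x^\alpha \D_\xi^\beta a$, $|\alpha| + |\beta| \geq 2$, of the symbol; against the Gaussian this produces a remainder that is $L^2$-bounded and, after conjugation by $T$, has a kernel concentrating near the graph of the flow, just as in Theorem~\ref{t:bilipthm}(2). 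That localization, not smoothing in $\xi$, is precisely the input the Duhamel argument requires so that $T^*\bigl(\int_s^t TS(t,\tau)T^* \, TR(\tau,\cdot,s,\tilde y)\,d\tau\bigr)$ collapses back to an amplitude of the stated form. With this mechanism substituted for the smoothing-in-$\xi$ heuristic, your outline is a faithful account of the argument underlying \cite[Theorem 4]{tataru2004phase}.
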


Theorems \ref{t:bilipthm} and \ref{t:repformula} were generalized in \cite{marzuola2008wave} to the class of symbols $a\in S^{(k)}L_\chi^1$ with $k = 2$, satisfying
$$\sup_{x, \xi} \int_0^1|\D_x^\alpha \D_\xi^\beta a(t, \chi(t, 0)(x, \xi))| \, dt \leq c_{\alpha, \beta}, \qquad |\alpha| + |\beta| \geq k.$$
For our purposes it suffices to consider an intermediate class of symbols $a\in L^1S_{0, 0}^{0, (2)}$ satisfying
$$\|\D_x^\alpha \D_\xi^\beta a\|_{L^1_t([0, T]; L^\infty)} \leq c_{\alpha, \beta}, \qquad |\alpha| + |\beta| \geq 2.$$

\section{Strichartz estimates}\label{s:strich}

In this section, we combine the representation formula in Theorem \ref{t:repformula} with a curvature condition to yield a dispersive estimate, from which Strichartz estimates follow via a classical $TT^*$ method.

First, we define a class of symbols in analogy with the class $\lambda S_\lambda^k$ in \cite{koch2005dispersive}. For $m \in \R$, $k > 0$, and $\delta \in [0, 1]$, the class of symbols $a(t, x, \xi) \in L^1S_{1, \delta}^{m, (k)}(\lambda)$ satisfies
\begin{equation}\label{L1-symbol}
\begin{aligned}
&\|\D_x^\alpha \D_\xi^\beta a\|_{L^1_t([0, 1]; L^\infty)} \leq c_{\alpha, \beta} \lambda^{m - |\beta| + \delta(|\alpha| - k)}, \qquad &|\alpha| \geq k.
\end{aligned}
\end{equation}
We also require the symbols to partially satisfy bounds uniform in time. Define the class of symbols $a \in S_1^m(\lambda)$ satisfying
\begin{equation}\label{Linfty-symbol}
\|\D_\xi^\beta a\|_{L^\infty_t([0, 1]; L^\infty)} \leq c_\beta \lambda^{m - |\beta|}.
\end{equation}

\begin{proposition}\label{p:dispersive}
Let $m \in [1, 2]$, $\delta = \frac{2 - m}{2}$, and consider the symbol
\begin{equation}\label{symbol-class}
a \in S_1^m(\lambda), \qquad \lambda^{2m - 2} a \in L^1S_{1, \delta}^{m, (2)}(\lambda),
\end{equation}
satisfying also
\begin{equation}\label{symbol-convexity}
| \det \D_\xi^2a (t, x, \xi)| \geq c\lambda^{d(m - 2)} \qquad \text{on} \qquad  [0, 1] \times \R^d \times \{|\xi| \approx \lambda\}.
\end{equation}

Let $u_0$ have frequency support $\{|\xi| \approx \lambda\}$. Then for all $|t - s| \leq 1$, we have
\[
\|S(t, s) u_0\|_{L^\infty} \lesssim \lambda^{\delta d} |t - s|^{-\frac{d}{2}} \|u_0\|_{L^1}.\]
\end{proposition}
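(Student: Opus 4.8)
The plan is to reduce the estimate to the model case of a unit-frequency, zeroth-order evolution on a long time interval, where Theorem~\ref{t:repformula} furnishes a phase space representation of the propagator, and then to extract the dispersive decay from that representation by stationary phase. Since $u_0$ is localized at $|\xi| \approx \lambda$, I would rescale $\xi = \lambda\zeta$, $x = \lambda^{-1}z$, $t = \lambda^{-m}s$, setting $\tilde u(s, z) = u(\lambda^{-m}s, \lambda^{-1}z)$. Then $\tilde u$ solves $(i\D_s + \tilde a^w)\tilde u = 0$ on $s \in [0, \lambda^m]$ with
\[
\tilde a(s, z, \zeta) = \lambda^{-m}\, a(\lambda^{-m}s, \lambda^{-1}z, \lambda\zeta),
\]
and $\tilde u_0 := \tilde u(0, \cdot)$ is localized at $|\zeta| \approx 1$. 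As $\|\tilde u(s, \cdot)\|_{L^\infty} = \|u(\lambda^{-m}s, \cdot)\|_{L^\infty}$ and $\|\tilde u_0\|_{L^1} = \lambda^d \|u_0\|_{L^1}$, an estimate $\|S(s', s'')\tilde u_0\|_{L^\infty} \lesssim |s' - s''|^{-d/2} \|\tilde u_0\|_{L^1}$ valid for $|s' - s''| \le \lambda^m$ unwinds to $\|S(t, s) u_0\|_{L^\infty} \lesssim \lambda^{d - md/2} |t - s|^{-d/2} \|u_0\|_{L^1} = \lambda^{\delta d} |t - s|^{-d/2} \|u_0\|_{L^1}$, since $\delta = \frac{2 - m}{2}$. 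So it suffices to prove the rescaled estimate.

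\emph{The rescaled symbol.} One has $\D_z^\alpha \D_\zeta^\beta \tilde a = \lambda^{-m - |\alpha| + |\beta|}(\D_x^\alpha \D_\xi^\beta a)(\lambda^{-m}s, \lambda^{-1}z, \lambda\zeta)$. The uniform-in-time bounds \eqref{Linfty-symbol} give $\|\D_\zeta^\beta \tilde a\|_{L^\infty_t L^\infty} \lesssim 1$, and from the hypothesis $\lambda^{2m - 2} a \in L^1 S^{m, (2)}_{1, \delta}(\lambda)$, after a change of variables in $t$,
\[
\|\D_z^\alpha \D_\zeta^\beta \tilde a\|_{L^1_s([0, \lambda^m]; L^\infty)} = \lambda^{-|\alpha| + |\beta|}\|\D_x^\alpha \D_\xi^\beta a\|_{L^1_t([0, 1]; L^\infty)} \lesssim \lambda^{(2 - m - 2\delta) + (\delta - 1)|\alpha|} = \lambda^{-\frac{m}{2}|\alpha|} \lesssim 1 \qquad (|\alpha| \ge 2);
\]
the remaining mixed terms with $|\alpha| = 1$, covered by neither hypothesis, are recovered by a Gagliardo--Nirenberg interpolation in $x$ together with Cauchy--Schwarz in $t$, using \eqref{Linfty-symbol} for the undifferentiated factor and the $|\alpha| = 2$ case of \eqref{L1-symbol} for the twice-differentiated factor. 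The crucial feature is that $\delta = \frac{2 - m}{2}$ makes all the exponents nonpositive, so the constants are uniform \emph{irrespective of the length} of the time interval; thus $\tilde a$ lies in the intermediate class $L^1 S^{0, (2)}_{0, 0}$ over $[0, \lambda^m]$ with constants independent of $\lambda$. Finally \eqref{symbol-convexity} rescales to $|\det \D_\zeta^2 \tilde a| \approx 1$ on $[0, \lambda^m] \times \R^d \times \{|\zeta| \approx 1\}$.

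\emph{Dispersive decay and conclusion.} By Theorem~\ref{t:repformula}, the kernel $\tilde K(s', z, s'', z')$ of $S(s', s'')$ is an oscillatory integral over $\R^{2d}$ whose amplitude $G$ is bounded together with all its derivatives and decays rapidly away from the Hamilton flow (bilipschitz by Theorem~\ref{t:bilipthm}), and whose phase is assembled from the flow maps and the action $\psi(s', \cdot) - \psi(s'', \cdot)$. Since $\D_\zeta^2 \tilde a$ is uniformly comparable in time to a nondegenerate matrix, the Hessian of this phase in the frequency variables is of size $\approx |s' - s''|$, so stationary phase yields $|\tilde K(s', z, s'', z')| \lesssim |s' - s''|^{-d/2}$ for all $|s' - s''| \le \lambda^m$ (the regime $|s' - s''| \lesssim 1$ being trivial). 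The pointwise kernel bound immediately gives the $L^1 \to L^\infty$ bound for $S(s', s'')$, and unwinding the rescaling completes the proof.

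\emph{Main obstacle.} The delicate step is the stationary phase analysis over a time interval as long as $\lambda^m$: one must show that the frequency-Hessian of the phase stays nondegenerate with the correct lower bound over the entire interval, i.e. that the accumulated variable-coefficient corrections --- controlled only through the $L^1_t$ norms of the previous step --- do not erode the convexity \eqref{symbol-convexity}, and that the bilipschitz but non-smooth flow and the amplitude $G$ do not spoil the estimate. It is exactly the scale invariance secured by the choice $\delta = \frac{2 - m}{2}$ that keeps these accumulated errors of size $O(1)$ and lets the argument close.
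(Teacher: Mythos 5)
Your rescaling strategy is genuinely different from the paper's, and it has a gap that I do not think you can close as written.

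The paper also rescales, but it fixes a time endpoint $\tau = |t-s|$ and rescales with the $\tau$-dependent factor $\mu = \tau^{1/2}\lambda^{-\delta}$, so that the rescaled evolution always runs on the \emph{fixed} interval $[0,1]$. The dispersive factor $|t-s|^{-d/2}$ then drops out of undoing the rescaling, and the only thing to prove on $[0,1]$ is a \emph{uniform} $L^1 \to L^\infty$ bound, obtained not by stationary phase but by the decay of the amplitude $G$ together with a change of variables $\xi \mapsto x^1$ using $|\det \D_\xi x^1| \gtrsim 1$. You instead use a single $\lambda$-dependent rescaling that pushes the evolution onto the \emph{long} interval $[0, \lambda^m]$, and want to extract the $|s'-s''|^{-d/2}$ decay by a stationary phase argument applied to the representation formula over that long interval.

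The problem is in your claim that $\tilde a \in L^1 S_{0,0}^{0,(2)}$ over $[0,\lambda^m]$ with constants independent of $\lambda$. You verify this for $|\alpha| \ge 2$, where the exponent $-\frac{m}{2}|\alpha|$ is indeed nonpositive, but for $|\alpha| = 0$, $|\beta| \ge 2$ you only cite the pointwise-in-time bound $\|\D_\zeta^\beta \tilde a\|_{L^\infty_s L^\infty} \lesssim 1$ from \eqref{Linfty-symbol}. The class $L^1 S_{0,0}^{0,(2)}$ requires the $L^1_s$-in-time bound, and over $[0,\lambda^m]$ the $L^\infty$ control only gives $\|\D_\zeta^\beta \tilde a\|_{L^1_s([0,\lambda^m]; L^\infty)} \lesssim \lambda^m$, which is not uniform in $\lambda$. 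This is not a cosmetic issue: the $L^1$ control of $\tilde a_{\xi\xi}$ (together with that of $\tilde a_{xx}, \tilde a_{x\xi}$) is precisely what guarantees $\|X\|_{L^\infty} + \|\Xi\|_{L^\infty} \lesssim 1$ in the flow analysis, and hence uniform bounds on the amplitude $G$. In your setting $X = \D_\xi x^t$ necessarily grows like $t$ over $[0,\lambda^m]$ — that growth is the dispersion — and then the correction $\Xi - I$ acquires contributions of size $\int_0^T |\tilde a_{xx}|\,|X|\,ds$ which scale like $T$ rather than staying $O(1)$, so the main term $\int_0^T \tilde a_{\xi\xi}\,ds \approx T\cdot I$ is not cleanly separated from the error. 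The paper's $\tau$-dependent scaling is exactly what sidesteps this: the flow estimates only ever need to be done on $[0,1]$, where the $L^1$ norms are controlled and the $O(1)$ errors are acceptable. Finally, note that the paper's argument avoids stationary phase entirely — the oscillatory factors are never exploited; only the decay $|G| \lesssim (1+|x^t - y|)^{-N}$ and the Jacobian of $\xi \mapsto x^t$ are used — so the parenthetical "stationary phase yields" in your last step is also doing quite a bit of unacknowledged work.
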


\begin{proof}
Without loss of generality let $s = 0$. We fix $\tau \in [0, 1]$ and prove the estimate when $t = \tau$. To do so, we reduce the problem to an estimate for $t = 1$ by rescaling. Write $u = S(t, 0)u_0$ and set
\[
v(t, x) = u(\tau t, \mu x), \qquad \mu = \tau^{\frac12} \lambda^{-\delta}.
\]
Then $v$ solves
$$(i\D_t + \tilde{a}(t, x, D)) v = 0, \qquad v(0) = u_0(0, \mu x) =: v_0$$
where
$$\tilde{a}(t, x, \xi) = \tau a(\tau t, \mu x, \mu^{-1} \xi),$$
and it suffices to show
$$\|v(1)\|_{L^\infty} \lesssim \|v_0\|_{L^1}.$$

We first show $\tilde{a} \in L^1S_{0, 0}^{0, (2)}$ in order to apply Theorem \ref{t:repformula}. First observe that we may assume $\tau^{-1} \lambda^{-m} \leq 1$, otherwise the dispersive estimate is immediate by Sobolev embedding. When $|\alpha| \geq 2$, we have, since $\lambda^{2m-2}a \in L^1S_{1, \delta}^{m, (2)}(\lambda)$,
\begin{equation}
\begin{aligned} \label{tildeabd}
\|\D_x^\alpha\D_\xi^\beta \tilde{a}\|_{L^1_t([0, 1]; L^\infty)} &= \mu^{|\alpha| - |\beta|} \|\D_x^\alpha\D_\xi^\beta a\|_{L^1_t([0, \tau]; L^\infty)}  \\
&\lesssim \mu^{|\alpha| - |\beta|} \cdot \lambda^{2 - 2m} \cdot \lambda^{m - |\beta| + \delta(|\alpha| - 2)}\\
& = \tau^{\frac12 |\alpha|} (\tau^{-1} \lambda^{-m})^{\half |\beta|} \lesssim 1.
\end{aligned}
\end{equation}
If $|\alpha| = 0$, we have $|\beta| \geq 2$ and we instead use $a \in S_1^m(\lambda)$ to estimate
\begin{equation}\label{tildeabd3}
\begin{aligned}
\|\D_\xi^\beta \tilde{a}\|_{L^1_t([0, 1]; L^\infty)} &= \mu^{- |\beta|} \|\D_\xi^\beta a\|_{L^1_t([0, \tau]; L^\infty)} \\
&\lesssim \mu^{- |\beta|} \tau\|\D_\xi^\beta a\|_{L^\infty_t([0, \tau]; L^\infty)} \\
&\lesssim \mu^{- |\beta|} \tau \lambda^{m - |\beta|} \\
&= (\tau^{-1}\lambda^{-m})^{\frac12 (|\beta| - 2)} \lesssim 1.
\end{aligned}
\end{equation}
Lastly, if $|\alpha| = 1$, we have by interpolation,
\begin{equation}
\begin{aligned} \label{tildeabd2}
\|\D_x^\alpha\D_\xi^\beta \tilde{a}\|_{L^1_t([0, 1]; L^\infty)} &= \mu^{1 - |\beta|} \|\D_x^\alpha\D_\xi^\beta a\|_{L^1_t([0, \tau]; L^\infty)}  \\
&\lesssim \mu^{1 - |\beta|} \cdot \lambda^{1 - m} \cdot \lambda^{m - |\beta|}\\
& = (\tau^{-1} \lambda^{-m})^{\half (|\beta| - 1)} \lesssim 1.
\end{aligned}
\end{equation}
This completes the proof that $\tilde{a} \in L^1S_{0, 0}^{0, (2)}$.

\

We may then apply the representation formula in Theorem \ref{t:repformula},
\begin{equation*}
\begin{aligned}
v(t, y)  &= \int e^{-\half(\tilde{y} - x)^2} e^{-i\xi(\tilde{y} - x)} e^{i\psi(t, x, \xi)} e^{i\xi^t(y - x^t)} G(t, 0, x, \xi, y)  v_0(\tilde{y}) \, dx d\xi d\tilde{y}.
\end{aligned}
\end{equation*}
By the frequency support of $v_0$ in $B := \{|\xi| \approx \mu\lambda \}$, the contribution of the complement of $B$ to the integral is negligible, so it suffices to consider
$$\int\int_{B} |G(t, x, \xi, y)| \, d\xi \, e^{-\half(\tilde{y} - x)^2} |v_0(\tilde{y})| \, dx d\tilde{y} \lesssim \|v_0\|_{L^1}\sup_x \int_B |G(t, x, \xi, y)| \, d\xi.$$
It remains to show
$$\int_B |G(1, x, \xi, y)| \, d\xi \lesssim 1.$$
Given the bound for $G$ in Theorem \ref{t:repformula}, this reduces to showing
$$\int_B (1 + |x^1 - y|)^{-N} \, d\xi \lesssim 1.$$

We consider the dependence of $x^1 = x^1(x, \xi)$ on $\xi$. Denote
$$(X, \Xi) := (\D_\xi x^t, \D_\xi \xi^t),$$
which by the Hamilton flow (\ref{hamilton}) for $\tilde{a}$ solves
\begin{equation}
\begin{cases}
\dot{X} = \tilde{a}_{\xi x} X + \tilde{a}_{\xi \xi} \Xi, \qquad X(0) = 0 ,\\
\dot{\Xi} = -\tilde{a}_{xx} X - \tilde{a}_{x \xi} \Xi, \qquad \Xi(0) = I.
\end{cases}
\end{equation}
From \eqref{tildeabd}, \eqref{tildeabd3}, and \eqref{tildeabd2}, we have
\[\|\tilde{a}_{x\xi}\|_{L^1_t([0, 1]; L^\infty)} + \|\tilde{a}_{\xi\xi}\|_{L^1_t([0, 1]; L^\infty)} + \|\tilde{a}_{xx}\|_{L^1_t([0, 1]; L^\infty)} \lesssim 1.\]
We conclude
$$\|X\|_{L^\infty} + \|\Xi\|_{L^\infty} \lesssim 1.$$
In turn, we have
$$\Xi(t) = \Xi(0) + \int_0^t \dot{\Xi} \, ds = I - \int_0^t \tilde{a}_{xx} X + \tilde{a}_{x \xi} \Xi \, ds = I + O(1),$$
$$\tilde{a}_{\xi \xi} \Xi = \tilde{a}_{\xi \xi} + O(1),$$
and hence
\begin{equation}\label{Xflow}
X(t) = X(0) + \int_0^t \dot{X} \, ds = \int_0^t \tilde{a}_{\xi x} X + \tilde{a}_{\xi \xi} \Xi \, ds = \int_0^t \tilde{a}_{\xi \xi} \, ds + O(1).
\end{equation}
We conclude
$$|\det \D_\xi x^1| = |\det X(1)| = \left |\int_0^1 \det \tilde{a}_{\xi \xi} \, ds \right| + O(1) \gtrsim 1,$$
and hence we may change variables to obtain
$$\int_B (1 + |x^1 - y|)^{-N} \, d\xi \lesssim \int (1 + |x^1 - y|)^{-N} \, dx^1 \lesssim 1$$
as desired.
\end{proof}

\subsection{Strichartz estimates}

A Strichartz estimate follows from Proposition~\ref{p:dispersive} and a standard $TT^*$ argument (see for instance \cite{KTStrich}):

\begin{theorem}\label{t:strichartz}
Let $m \in [1, 2]$, $\delta = \frac{2 - m}{2}$, and symbol $a(t, x, \xi)$ satisfy \eqref{symbol-class} and \eqref{symbol-convexity}. Let $u$ have frequency support $\{|\xi| \approx \lambda\}$ and solve
\begin{equation}\label{inhomog}
\begin{cases}
(i\D_t + a^w(t, x, D))u = f, \qquad &\text{in } (0, 1) \times \R^d, \\
u(0) = u_0, \qquad &\text{on } \R^d.
\end{cases}
\end{equation}
Then for $p, q$ satisfying
\begin{equation}\label{pq}
\frac{2}{p} + \frac{d}{q} = \frac{d}{2}, \quad 2 \leq p \leq \infty, \quad 1 \leq q \leq \infty, \quad (p, q) \neq (2, \infty),
\end{equation}
we have
\begin{equation}\label{strich-estimate}
\|u\|_{L^p([0, 1];L^q)} \lesssim \lambda^{\frac{2\delta}{p}} (\|u\|_{L^\infty([0, 1];L^2)} + \|f\|_{L^1([0, 1];L^2)}).
\end{equation}
\end{theorem}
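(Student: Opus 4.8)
The plan is to derive the Strichartz estimate from the dispersive bound in Proposition~\ref{p:dispersive} via the standard abstract $TT^*$ (Keel--Tao) machinery. Write $U(t) := S(t, 0)$, which by self-adjointness of $a^w$ is a family of isometries on $L^2$. The two inputs required by the abstract argument are: (i) the $L^2 \to L^2$ bound $\|U(t)\|_{L^2 \to L^2} \lesssim 1$, which is immediate from the isometry property; and (ii) the untruncated dispersive bound $\|U(t)U(s)^* g\|_{L^\infty} \lesssim \lambda^{\delta d}|t-s|^{-d/2}\|g\|_{L^1}$ for $|t-s| \le 1$, which is exactly Proposition~\ref{p:dispersive} applied to frequency-$\lambda$-localized data, using $U(t)U(s)^* = S(t,s)$. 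Since we only work on the unit time interval $[0,1]$, the constraint $|t-s| \le 1$ is automatically satisfied, so there is no need for the Christ--Kiselev lemma or for a time-decomposition; the abstract theorem of Keel--Tao applies directly with decay exponent $\sigma = d/2$ and the extra constant $\lambda^{\delta d}$ carried through.

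Concretely, I would first record that for frequency-localized $g$, the dispersive estimate interpolates with the $L^2$ isometry bound to give, for each fixed $t, s \in [0,1]$,
\[
\|U(t)U(s)^* g\|_{L^q} \lesssim \lambda^{2\delta d(\tfrac12 - \tfrac1q)} |t-s|^{-d(\tfrac12 - \tfrac1q)} \|g\|_{L^{q'}}, \qquad 2 \le q \le \infty.
\]
Then the $TT^*$ argument applied to the operator $g \mapsto \int_0^1 U(t)U(s)^* g(s)\,ds$, combined with a Hardy--Littlewood--Sobolev estimate in the time variable for the admissible exponents \eqref{pq}, yields $\|U(t)u_0\|_{L^p_t L^q_x} \lesssim \lambda^{2\delta/p}\|u_0\|_{L^2}$ for the homogeneous evolution, where the power $\lambda^{2\delta/p}$ arises because the scaling-and-interpolation exponent works out to $\lambda^{d\delta(1 - 2/q)}$ and $d(1-2/q) = 4/p$ on the admissible line $\tfrac2p + \tfrac dq = \tfrac d2$, but is distributed as $\lambda^{2\delta/p}$ on each of the two factors in the $TT^*$ pairing. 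For the inhomogeneous term $f$ in \eqref{inhomog}, Duhamel's formula gives $u(t) = U(t)u_0 - i\int_0^t U(t)U(s)^* f(s)\,ds$; one estimates the integral by Minkowski's inequality, putting $f$ in $L^1_t L^2_x$ and applying the homogeneous estimate to $U(t-s)$ acting on each slice $f(s)$ — again, since the time interval has length $\le 1$, no Christ--Kiselev splitting of the retarded integral is needed, only $\int_0^t \le \int_0^1$. Finally, the $L^\infty_t L^2_x$ norm of $u$ replaces $\|u_0\|_{L^2}$ on the right since $u_0 = u(0)$ and we may equally well run Duhamel from any base point, or simply bound $\|u_0\|_{L^2} \le \|u\|_{L^\infty_t L^2_x}$.

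The substantive content has all been front-loaded into Proposition~\ref{p:dispersive}, so the only genuine point to be careful about is the bookkeeping of the $\lambda$-power: one must check that interpolating the $\lambda^{\delta d}$-weighted $L^1 \to L^\infty$ bound against the $\lambda^0$-weighted $L^2 \to L^2$ bound, and then feeding the result through the Keel--Tao scheme, produces precisely $\lambda^{2\delta/p}$ and not, say, $\lambda^{\delta d/p}$ or $\lambda^{2\delta/p + \text{(something)}}$ — the identity $\tfrac2p + \tfrac dq = \tfrac d2$ is exactly what reconciles these. I expect the main (minor) obstacle to be nothing more than verifying the hypotheses of the abstract Keel--Tao theorem in the form needed: that we have a uniformly bounded family on $L^2$ together with a decay estimate of order $d/2$ on the diagonal block, which is below the endpoint threshold when $d \ge 2$ (hence $p = 2$ is excluded by $(p,q) \ne (2,\infty)$, consistent with the statement), and is handled classically for $d = 1$. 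No $\varepsilon$-loss or microlocal time subdivision enters here; those appear only later in Section~\ref{s:gen-strich}.
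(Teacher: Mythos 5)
Your proposal is correct and takes essentially the same route as the paper, which simply invokes the Keel--Tao $TT^*$ machinery on Proposition~\ref{p:dispersive}; your interpolation step and the exponent bookkeeping (producing $\lambda^{4\delta/p}$ for the $TT^*$ operator and hence $\lambda^{2\delta/p}$ for the evolution) are carried out correctly, and you rightly observe that with $f$ measured in $L^1_tL^2_x$ no Christ--Kiselev step is needed. One small inaccuracy in your closing remark: for $d \ge 3$ the pair $(p,q) = \left(2, \tfrac{2d}{d-2}\right)$ is admissible and \emph{is} the Keel--Tao endpoint, so the endpoint version of their theorem is in fact being used there; the exclusion $(p,q) \ne (2,\infty)$ only removes the forbidden case $d = 2$, not $p = 2$ in general.
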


\section{Strichartz estimates for rough coefficients}\label{s:gen-strich}

In this section we generalize the Strichartz estimate of Theorem~\ref{t:strichartz}, considering dependence on lifespan $T$ and introducing a parameter $\mu$ which we will subsequently use to track variable symbol regularity.

\begin{theorem}\label{t:var-strich}
Let $m \in [1, 2]$. Consider a symbol $a(t, x, \xi)$ satisfying 
\[
\|\D_\xi^\beta a\|_{L_t^\infty([0, T]; L^\infty_{x, \xi})} \lesssim \lambda^{m - |\beta|}, \qquad T\|\D_\xi^\beta a\|_{L_t^1([0, T]; L^\infty_\xi \dot C_x^{2})} \lesssim (\mu \lambda^{1 - m})^2 \lambda^{m - |\beta|},
\]
and \eqref{symbol-convexity} on $t \in [0 ,T]$. Let $u$ have frequency support $\{|\xi| \approx \lambda\}$ and solve \eqref{inhomog} on $t \in [0, T]$. Then for $p, q$ satisfying \eqref{pq}, we have
\begin{equation}\label{var-strich}
\|u\|_{L^p([0, T];L^q)} \lesssim \lambda^{\frac{2\delta}{p}}\mu^{\frac{1}{p}} (\|u\|_{L^1([0, T];L^2)} + \mu^{-1}\|f\|_{L^1([0, T];L^2)}).
\end{equation}
\end{theorem}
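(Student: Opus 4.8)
The plan is to reduce Theorem~\ref{t:var-strich} to Theorem~\ref{t:strichartz} by two rescalings: first a time rescaling that maps $[0,T]$ to $[0,1]$, and second a spatial rescaling to renormalize the frequency scale. Write $u(t,x) = v(t/T', x)$ where $T'$ is chosen so that the renormalized symbol satisfies the hypotheses \eqref{symbol-class} on a unit time interval; the natural choice will be dictated by the condition $T\|\D_\xi^\beta a\|_{L^1_t \dot C^2_x} \lesssim (\mu\lambda^{1-m})^2 \lambda^{m-|\beta|}$, which says the $\dot C^2_x$-norm over the full interval is a factor $(\mu\lambda^{1-m})^2$ larger than what the clean hypothesis \eqref{L1-symbol} would permit. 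To absorb this discrepancy I would partition $[0,T]$ into $\sim \mu^{-1}$ subintervals $I_j$ of equal length, chosen so that on each $I_j$ the symbol satisfies the hypotheses of Theorem~\ref{t:strichartz} after rescaling $I_j$ to $[0,1]$; the factor $\mu^{-1}$ in the number of intervals is exactly what produces the $\mu^{1/p}$ gain in \eqref{var-strich} when one sums the $L^p_t$ norms.

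Concretely, on each subinterval $I_j$ of length $|I_j| \approx T\mu$, rescale time to $[0,1]$. The $L^\infty_t$ bound on $\D_\xi^\beta a$ is scale-invariant in the right way after one also rescales $x$ by $\mu\lambda^{-\delta}$ as in the proof of Proposition~\ref{p:dispersive} (here $\delta = (2-m)/2$), so that the new frequency scale becomes $O(1)$-comparable to what Theorem~\ref{t:strichartz} wants; meanwhile the $\dot C^2_x$ bound on $I_j$, being the full-interval bound divided by $\mu^{-1}$, becomes $\lesssim T\mu \cdot \mu^{-2}(\mu\lambda^{1-m})^2\lambda^{m-|\beta|} \cdot (\text{spatial rescaling factors})$, and the choice of spatial rescaling is arranged precisely so this matches \eqref{L1-symbol}. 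The convexity hypothesis \eqref{symbol-convexity} is preserved under both rescalings (it is homogeneous in $\lambda$ and the determinant transforms by a fixed power of the spatial scaling). Then Theorem~\ref{t:strichartz} applied on each $I_j$ gives $\|u\|_{L^p(I_j; L^q)} \lesssim \lambda^{2\delta/p}\mu^{1/p}(\|u\|_{L^\infty(I_j;L^2)} + \|f\|_{L^1(I_j;L^2)})$, where the extra $\mu^{1/p}$ comes from the length of $I_j$ entering the $L^p_t$ norm after unscaling (versus the unit interval in Theorem~\ref{t:strichartz}). Summing the $p$-th powers over the $\sim\mu^{-1}$ intervals and using $\ell^p \hookrightarrow \ell^\infty$ on the right-hand side in $L^2_t$, or more carefully bounding $\sum_j \|u\|_{L^\infty(I_j;L^2)}^p$, yields the claimed estimate after replacing $\|u\|_{L^\infty_t L^2}$ by $\|u\|_{L^1_t L^2}$ via Duhamel and Minkowski (which is why the right side of \eqref{var-strich} has $L^1_t L^2$ rather than $L^\infty_t L^2$, and why the forcing term carries the $\mu^{-1}$).

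The bookkeeping subtlety — and the main obstacle — is the handling of the energy term on the right. Theorem~\ref{t:strichartz} produces $\|u\|_{L^\infty(I_j;L^2)}$ on each piece, but summing $\sum_j \|u\|_{L^\infty(I_j;L^2)}$ naively costs a factor $\mu^{-1}$ (the number of intervals), which would overwhelm the $\mu^{1/p}$ gain. The resolution is to reconstruct the solution across subintervals: on $I_j$, $u$ solves the equation with data $u(t_j)$ from the left endpoint, and $\|u\|_{L^\infty(I_j;L^2)} \lesssim \|u(t_j)\|_{L^2} + \|f\|_{L^1(I_j;L^2)}$ by the $L^2$ isometry property of the Weyl quantization; but $\|u(t_j)\|_{L^2} \leq \|u(t_{j-1})\|_{L^2} + \|f\|_{L^1(I_{j-1};L^2)}$ telescopes. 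One must instead keep the $L^1_t L^2$ norm throughout: $\|u\|_{L^\infty(I_j;L^2)} \lesssim |I_j|^{-1}\|u\|_{L^1(I_j;L^2)} + \|f\|_{L^1(I_j;L^2)}$ fails dimensionally, so the correct move is to run the argument with the inhomogeneous term absorbing the energy — i.e., treat $u$ on $[0,T]$ directly via the Duhamel representation $u(t) = S(t,0)u_0 + \int_0^t S(t,s)f(s)\,ds$, apply the dispersive/$TT^*$ machinery to each piece while viewing contributions from other pieces as forcing, and sum. This is the standard but delicate "summing Strichartz over short intervals" argument (as in Tataru's work and the author's \cite{A}); getting the powers of $\mu$ to land exactly at $\mu^{1/p}$ and $\mu^{-1}$ is where the real care is needed. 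I expect the cleanest route is: prove the estimate first with $\|u\|_{L^\infty_t L^2}$ on the right and only $\mu^{1/p}$ (no $\mu^{-1}$ on $f$) on a single rescaled interval, then upgrade to the $L^1_t L^2$ form by a further subdivision and Duhamel, which is exactly the trade that converts the $\mu^{-1}$-many intervals into the $\mu^{-1}$ weight on $f$.
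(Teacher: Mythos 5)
Your high-level scaffolding (rescale time, split $[0,T]$ into subintervals on which Theorem~\ref{t:strichartz} applies after rescaling, sum the $L^p_t$ norms) is the right shape, but the proposal misses the single essential idea and this is a genuine gap. You propose cutting $[0,T]$ into subintervals \emph{of equal length}. This is exactly the naive partitioning the paper's introduction identifies as insufficient, and it fails here. After normalizing to $T=1$, the total budget is $\|\D_\xi^\beta a\|_{L^1_t([0,1];\dot C_x^2)} \lesssim \mu^2 \lambda^{2-2m}\lambda^{m-|\beta|}$, and the condition to invoke the $\mu=1$ case on a piece $I_j$ is $|I_j|\cdot \lambda^{2m-2}\lambda^{|\beta|-m}\|\D_\xi^\beta a\|_{L^1(I_j;\dot C_x^2)}\leq 1$. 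With equal-length pieces of length $1/k$, if the $\dot C_x^2$ mass concentrates on a single short subinterval, one is forced to take $k\gtrsim\mu^2$, and summing $p$-th powers over $\mu^2$ pieces produces $\mu^{2/p}$, not the claimed $\mu^{1/p}$. Nothing in the hypotheses forbids such concentration, since the $\dot C_x^2$ bound is only an $L^1_t$ bound. The paper instead uses a \emph{maximal adaptive} decomposition $0=t_0<\cdots<t_k=1$ on each piece of which both the symbol constraint \eqref{symbol-split} and the forcing constraint \eqref{inhomog-split} hold, and then exploits maximality: each piece saturates at least one of the two constraints, and the elementary inequality $c+c^{-1}\gtrsim 1$ gives $\mu|I_j|+\mu^{-1}\lambda^{2m-2}\lambda^{|\beta|-m}\|\D_\xi^\beta a\|_{L^1(I_j;\dot C_x^2)}\gtrsim 1$ on each $\beta$-saturating piece; summing this and using the total budget shows $k\approx\mu$ regardless of how the roughness is distributed in time. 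This adaptive choice of subinterval lengths is the mechanism that delivers $\mu^{1/p}$, and it is absent from your argument.

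Two further points. First, you count $\sim\mu^{-1}$ subintervals of length $T\mu$; the correct count is $\sim\mu$ (with $\mu\geq 1$), and the factor $\mu^{1/p}$ arises because a sum of $\mu$ terms raised to the $p$-th power contributes $\mu^{1/p}$ after the final root, not from interval lengths entering the norm. Your version would require $\mu<1$ and would produce the wrong sign in the exponent of $\mu$. Second, your extended discussion of ``reconstructing the solution across subintervals'' and worrying about the $L^\infty_t L^2$ term is not where the difficulty lies: since the evolution is $L^2$-isometric, $\|u\|_{L^\infty(I_j;L^2)}\leq\|u\|_{L^\infty([0,T];L^2)}$ uniformly in $j$, and this uniform bound is already budgeted for by the $\mu^{1/p}$ prefactor when you sum the $p$-th powers. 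You also have not addressed how the $\mu^{-1}$ prefactor on $\|f\|_{L^1_tL^2}$ is produced; in the paper this comes precisely from incorporating \eqref{inhomog-split} into the same adaptive partition. In short, the proof hinges on a variable-length decomposition balanced simultaneously against the symbol roughness and the forcing, and the equal-length decomposition you propose does not recover the stated exponent.
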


\begin{proof}

We establish the general estimate via a series of generalizations.

\

\emph{Step 1.} We establish the case $\mu = 1$ and $T = 1$. Writing $I = [0, 1]$, we have the truncation error
\begin{equation*}
\begin{aligned}
\|a_{> \lambda^\delta }u\|_{L^1(I;L^2)} &\lesssim \lambda^{m} \cdot \|\lambda^{-m} a_{> \lambda^\delta}\|_{L^1(I; L^\infty)} \|u\|_{L^\infty(I; L^2)} \\
&\lesssim \lambda^m \lambda^{-2\delta} \cdot \|\lambda^{-m} a\|_{L_t^1(I; L^\infty_\xi \dot C_x^{2})} \|u\|_{L^\infty(I; L^2)} \\
&\lesssim \|u\|_{L^\infty(I; L^2)}.
\end{aligned}
\end{equation*}
Absorbing this into $f$, it is easy to check that $\lambda^{2m - 2} a_{\leq \lambda^{\delta}} \in L^1S_{1, \delta}^{m, (2)}(\lambda)$, so we may apply Theorem~\ref{t:strichartz}.

\

\emph{Step 2.} We next establish the case $\mu = 1$ and general lifespan $T$. We apply the scaling 
\[
\tilde a = T a(Tt, T^\frac{1}{m} x, T^{-\frac{1}{m}} \xi), \qquad \tilde u = u(Tt, T^\frac{1}{m} x), \qquad \tilde f = Tf(Tt, T^\frac{1}{m} x)
\]
with $\tilde \lambda = T^{\frac{1}{m}} \lambda$. Then
\[
\|\D_\xi^\beta \tilde a\|_{L_t^1([0, 1]; L^\infty_\xi \dot C_x^{2})} = T^{\frac{2}{m} - \frac{|\beta|}{m}} \|\D_\xi^\beta a\|_{L_t^1([0, T]; L^\infty_\xi \dot C_x^{2})} \lesssim T^{\frac{2}{m} - \frac{|\beta|}{m}} T^{-1} \lambda^{2 - 2m} \lambda^{m - |\beta|} = \tilde \lambda^{2 - 2m} \tilde\lambda^{m - |\beta|},
\]
and 
\[
\|\D_\xi^\beta \tilde a\|_{L_t^\infty([0, 1]; L^\infty_{x, \xi})} \lesssim T^{1 - \frac{|\beta|}{m}}\|\D_\xi^\beta a\|_{L_t^\infty([0, T]; L^\infty_{x, \xi})} \lesssim \tilde \lambda^{m - |\beta|},
\]
reducing to the $\mu = 1$, $T = 1$ case.

\

\emph{Step 3.} We establish the case with general $\mu$ and $T = 1$. Decompose $[0, 1]$ into maximal subintervals 
\[
0 = t_0 < t_1 < ... < t_k = 1
\]
satisfying both
\begin{equation}\label{inhomog-split}
\|f\|_{L^1([t_j, t_{j + 1}]; L^2)} \leq \mu^{-1}\|f\|_{L^1([0, 1]; L^2)}
\end{equation}
and
\begin{equation}\label{symbol-split}
(t_{j + 1} - t_j) \cdot \lambda^{-2(1 - m)} \lambda^{|\beta| - m} \|\D_\xi^\beta a\|_{L^1([t_j, t_{j + 1}];L^\infty_\xi C_x^2)} \leq 1
\end{equation}
for each $0 \leq |\beta| \leq N$. By \eqref{inhomog-split}, the number $k$ of intervals satisfies
\[
k \geq \mu.
\]

In fact, we show that $k \approx \mu$. For the upper bound on $k$, observe that by maximality, each interval must satisfy equality in at least one of \eqref{inhomog-split} or \eqref{symbol-split} for some $\beta$. The number $k^*$ of intervals with equality in \eqref{inhomog-split} is at most $\mu$. On the other hand, for an interval with equality in \eqref{symbol-split}, since $c^{-1} + c \gtrsim 1$ for any $c$,
\[
\mu (t_{j + 1} - t_j) + \mu^{-1} \lambda^{-2(1 - m)} \lambda^{|\beta| - m} \|\D_\xi^\beta a\|_{L^1([t_j, t_{j + 1}];L^\infty_\xi C_x^2)} \gtrsim 1.
\]
Summing over all $k_\beta$ such intervals, we have
\[
\mu + \mu^{-1} \mu^2 \gtrsim k_\beta,
\]
and thus
\[
k = k^* + \sum_\beta k_\beta \lesssim \mu + \mu^{-1} \mu^2 \lesssim \mu.
\]

On each interval $[t_j, t_{j + 1}]$, we are in the setting of Step 2 and thus have
\[
\|u\|_{L^p([t_j, t_{j + 1}];L^q)} \lesssim \lambda^{\frac{2\delta}{p}} (\|u_0\|_{L^2} + \|f\|_{L^1([t_j, t_{j + 1}];L^2)}).
\]
Using \eqref{inhomog-split} we have
\[
\|u\|_{L^p([t_j, t_{j + 1}];L^q)} \lesssim \lambda^{\frac{2\delta}{p}} (\|u_0\|_{L^2} + \mu^{-1} \|f\|_{L^1([0, 1]; L^2)}).
\]
Raising to the power $p$ and summing over the $\mu$ intervals, we obtain the desired estimate.

\

\emph{Step 4.} For the case of general $\mu$ and lifespan $T$, we scale as in Step 2.
\end{proof}

As an immediate consequence, we have a Strichartz estimate with loss for symbols with lower regularity:

\begin{corollary}\label{c:var-strich-rough}
Let $m \in [1, 2]$. Consider a symbol $a(t, x, \xi)$ satisfying for $r \in [0, 2]$,
\[
\|\D_\xi^\beta a\|_{L_t^\infty([0, 1]; L^\infty_{x, \xi})} + \|\D_\xi^\beta a\|_{L_t^1([0, 1]; L^\infty_\xi \dot C_x^{r})} \lesssim \lambda^{m - |\beta|},
\]
and \eqref{symbol-convexity} on $t \in [0 ,1]$. Let $u$ have frequency support $\{|\xi| \approx \lambda\}$ and solve \eqref{inhomog} on $t \in [0, 1]$. Then for $p, q$ satisfying \eqref{pq} and
\[
\mu = \lambda^{m - 1}\lambda^{\frac{2 - r}{2 + r}},
\]
we have
\begin{equation}\label{var-strich-rough}
\|u\|_{L^p([0, 1];L^q)} \lesssim \lambda^{\frac{2\delta}{p}}\mu^{\frac{1}{p}} (\|u\|_{L^1([0, 1];L^2)} + \mu^{-1}\|f\|_{L^1([0, 1];L^2)}).
\end{equation}
\end{corollary}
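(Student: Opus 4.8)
The plan is to reduce to Theorem~\ref{t:var-strich} by first regularizing the coefficient with a mollification in the $x$ variable. Fix the scale $h = \lambda^{\frac{2}{2+r}}$, let $\phi_h(y) = h^d\phi(hy)$ be a standard mollifier, and write $a = a^\flat + a^\sharp$ where $a^\flat(t,x,\xi) = (\phi_h \ast_x a)(t,x,\xi)$ has $x$-frequencies $\lesssim h$ and $a^\sharp = a - a^\flat$; set $\mu = \lambda^{m-1}\lambda^{\frac{2-r}{2+r}}$ as in the statement. The two exponents are rigged so that two identities hold. First, a Bernstein inequality applied to $a^\flat$ (which has $x$-frequency $\lesssim h$) gives
\[
\|\D_\xi^\beta a^\flat\|_{L^1_t([0,1];L^\infty_\xi\dot C^2_x)} \lesssim h^{2-r}\|\D_\xi^\beta a\|_{L^1_t([0,1];L^\infty_\xi\dot C^r_x)} \lesssim h^{2-r}\lambda^{m-|\beta|} = (\mu\lambda^{1-m})^2\,\lambda^{m-|\beta|},
\]
which is exactly the $L^1_t$ hypothesis of Theorem~\ref{t:var-strich} with $T = 1$. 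Second, the standard mollification error estimate gives $\|a^\sharp(t,\cdot,\xi)\|_{L^\infty_x} \lesssim h^{-r}\|a(t,\cdot,\xi)\|_{\dot C^r_x}$, and the choice of $h$ makes $h^{-r}\lambda^m = \mu$.

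Next I would verify the remaining hypotheses for $a^\flat$. The uniform-in-time bound $\|\D_\xi^\beta a^\flat\|_{L^\infty_t L^\infty_{x,\xi}}\lesssim\lambda^{m-|\beta|}$ is immediate since mollification is bounded on $L^\infty$. The nondegeneracy \eqref{symbol-convexity} passes to $a^\flat$ because $\D_\xi^2 a^\flat(t,x,\xi) = \int\phi_h(y)\,\D_\xi^2 a(t,x-y,\xi)\,dy$ is a convex average, against a probability density, of Hessians that are all nondegenerate of the prescribed size $\approx\lambda^{m-2}$ — for the gravity--capillary symbol these are uniformly positive definite, so the average is as well. (This is the one place where a positivity-preserving regularization is essential: a sharp Littlewood--Paley truncation of $a$ would not keep $\D_\xi^2$ nondegenerate, hence would not make Proposition~\ref{p:dispersive} available.) Thus Theorem~\ref{t:var-strich} applies to $(i\D_t + (a^\flat)^w)u = f - (a^\sharp)^w u$ with parameter $\mu$ and $T = 1$, giving
\[
\|u\|_{L^p([0,1];L^q)} \lesssim \lambda^{\frac{2\delta}{p}}\mu^{\frac1p}\Big(\|u\|_{L^1([0,1];L^2)} + \mu^{-1}\|f\|_{L^1([0,1];L^2)} + \mu^{-1}\|(a^\sharp)^w u\|_{L^1([0,1];L^2)}\Big),
\]
and by a routine operator bound (e.g. Calderón--Vaillancourt, using the $\dot C^r_x$ control of $a$) together with the error identity above, the last term is $\lesssim \mu^{-1}h^{-r}\lambda^m\|u\|_{L^\infty_t L^2} = \|u\|_{L^\infty_t L^2}$.

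It then remains to absorb the $\|u\|_{L^\infty_t L^2}$ contribution. The crude route is the $L^2$ energy estimate for the self-adjoint evolution, $\|u(t)\|_{L^2}\le\|u(s)\|_{L^2}+\|f\|_{L^1_t L^2}$, averaged in $s$ over the unit interval, giving $\|u\|_{L^\infty_t L^2}\lesssim\|u\|_{L^1_t L^2}+\|f\|_{L^1_t L^2}$; this already yields \eqref{var-strich-rough} up to replacing $\mu^{-1}\|f\|$ by $\|f\|$. To recover the sharp weight $\mu^{-1}$ on the forcing, I would instead run the whole argument after partitioning $[0,1]$ into $\sim\mu$ subintervals balancing the $f$-mass and the $a^\flat$-regularity exactly as in Step~3 of the proof of Theorem~\ref{t:var-strich} — mollifying and applying the $\mu = 1$ estimate on each piece, then summing in $\ell^p$ — so that the energy estimate is only ever applied on short intervals. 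The main obstacle is precisely this bookkeeping, keeping track of $L^\infty_t$ versus $L^1_t$ norms against the $\mu$-weights; the one conceptual point, by contrast, is the choice of mollification over sharp truncation so as to preserve \eqref{symbol-convexity}.
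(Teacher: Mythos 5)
Your approach is essentially the paper's: truncate the symbol in $x$-frequency at scale $h = \lambda^{\frac{2}{2+r}}$, apply Theorem~\ref{t:var-strich} to the low-frequency piece, and absorb the high-frequency remainder as an additional forcing term; the exponent balancing you perform is precisely \eqref{mueqn} and the line following it. Where you go beyond the written proof is in spelling out two points that it leaves implicit. First, you note that the truncated symbol must still satisfy \eqref{symbol-convexity}, and observe that a nonnegative mollifier preserves this whenever $\D_\xi^2 a$ is uniformly definite (as it is for the gravity--capillary symbol $\gamma$); this is a legitimate concern that the paper's generic ``truncation at $\lambda^\sigma$'' does not explicitly address, and your mollifier device handles it cleanly. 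Second, you correctly observe that the remainder is naturally weighted by $\|u\|_{L^\infty_t L^2}$, and that converting to the $\|u\|_{L^1_t L^2}$ appearing in \eqref{var-strich-rough} via the crude energy estimate $\|u\|_{L^\infty_t L^2}\lesssim\|u\|_{L^1_t L^2}+\|f\|_{L^1_t L^2}$ costs the $\mu^{-1}$ weight on $\|f\|$. Your proposed repair --- carrying the truncation inside the interval-splitting of Step~3 of Theorem~\ref{t:var-strich}, so the energy estimate is invoked only on short intervals --- is the natural way to recover the stated weight. It is worth noting, though, that in the application in Section~\ref{lwp} the $\mu^{-1}$ weight on $\|f\|$ is not ultimately exploited, since $f$ is controlled at full regularity $H^s$ via \eqref{inhomog-est}; so the crude energy route already yields a version of the corollary sufficient for Theorem~\ref{t:main}.
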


\begin{proof}

Writing $I = [0, 1]$ and truncating at frequency $\lambda^\sigma$, we have
\[
\|\D_\xi^\beta a_{\lambda^\sigma} \|_{L_t^1(I; L^\infty_\xi \dot C_x^{2})} \lesssim \lambda^{\sigma (2 - r)}\|\D_\xi^\beta a_{\lambda^\sigma} \|_{L_t^1(I; L^\infty_\xi \dot C_x^{r})} \lesssim \lambda^{\sigma (2 - r)} \lambda^{m - |\beta|},
\]
so we are in the setting of Theorem~\ref{t:var-strich} with 
\begin{equation}\label{mueqn}
(\mu \lambda^{1 - m})^2 = \lambda^{\sigma (2 - r)}
\end{equation}
and the inhomogeneous truncation error
\begin{equation*}
\begin{aligned}
\|a_{> \lambda^\sigma }u\|_{L^1(I;L^2)} &\lesssim \lambda^{m} \cdot \|\lambda^{-m} a_{> \lambda^\sigma}\|_{L^1(I; L^\infty)} \|u\|_{L^\infty(I; L^2)} \\
&\lesssim \lambda^m \lambda^{-r\sigma} \cdot \|\lambda^{-m} a\|_{L_t^1(I; L^\infty_\xi \dot C_x^{r})} \|u\|_{L^\infty(I; L^2)} \\
&\lesssim \lambda^{m - r\sigma} \|u\|_{L^\infty(I; L^2)}.
\end{aligned}
\end{equation*}
This may be absorbed into the first term on the right hand side of \eqref{var-strich} provided that
\[
\mu^{-1}\lambda^{m - r\sigma} \lesssim 1.
\]
Combined with \eqref{mueqn}, this takes the form
\[
\lambda^{m - r\sigma} \lesssim \mu = \lambda^{m - 1}\lambda^{\half \sigma (2 - r)},
\]
which holds if $\sigma = \frac{2}{2 + r}$.

\end{proof}

\section{Well-posedness for capillary-gravity water waves}\label{lwp}

We recall the paradifferential reduction 
\begin{equation}\label{paradiff}
\D_t u + T_V \cdot \nabla u + i T_\gamma u = f
\end{equation}
of the gravity-capillary system \eqref{zak} established by Alazard-Burq-Zuily \cite{ABZcap} and later refined by de Poyferre-Nguyen\cite{PN0}. Here,
\[
\gamma = \sqrt{\ell \lambda},
\]
which symmetrizes symbols of the Dirichlet-to-Neumann map and the mean-curvature operator,
\[
\lambda = \sqrt{(1 + |\nabla \eta|^2) |\xi|^2 - (\nabla \eta \cdot \xi)^2}, \qquad \ell = (1 + |\nabla \eta|^2)^{-\half} \left(|\xi|^2 - \frac{(\nabla \eta \cdot \xi)^2}{1 + |\nabla \eta|^2} \right),
\]
respectively. For the symbols of the unknown,
\[
u = T_p \eta + i T_q (\psi - T_B \eta),
\]
where
\[
q = (1 + |\nabla \eta|^2)^{-\half}, \quad p = (1 + |\nabla \eta|^2)^{-\frac54}|\xi|^{\half} + F(\nabla \eta, \xi)\D_x^\alpha \eta,
\]
and $F$ is smooth away from the origin, of order $-1/2$ in $\xi$ and $|\alpha| = 2$. Lastly, we have used $(V, B)(t, x)$ to denote the horizontal and vertical components of the velocity field restricted to the surface,
\[
V = (\nabla \phi)|_{y = \eta(t, x)}, \quad B = (\D_y \phi) |_{y = \eta(t, x)}.
\]

Refining the result of Alazard-Burq-Zuily \cite{ABZcap}, de Poyferre-Nguyen established the following estimate on the errors arising from the reduction of \eqref{zak} to the paradifferential flow \eqref{paradiff}, contained in the source term $f$. This estimate allow us to view the source perturbatively, which opens the way to using the paradifferential flow as a starting point for energy and Strichartz estimates.

\begin{theorem}[de Poyferre-Nguyen \cite{PN0}]\label{t:paralin}
Let 
\[
s > \frac32 + \frac{d}{2}, \quad 2 < r < s - \frac{d}{2} + \half.
\]
Suppose that $(\eta, \psi)$ is a solution of \eqref{zak} satisfying \eqref{width} and
\[
(\eta, \psi) \in L^\infty (I; H^{s + \half} (\R^d) \times H^s (\R^d)) \cap L^p (I; W^{r + \half, \infty}(\R^d) \times W^{r, \infty}(\R^d))
\]
with $p \in [1, \infty]$. Then
\[
u := T_p \eta + i T_q (\psi - T_B \eta)
\]
satisfies \eqref{paradiff} where
\begin{equation}\label{inhomog-est}
\|f(t) \|_{H^s} \leq \FF(\|\eta(t)\|_{H^{s + \half}}, \|\psi(t)\|_{H^s}) (1 + \|\eta(t)\|_{W^{r + \half,\infty}} + \|\psi(t)\|_{W^{r, \infty}}).
\end{equation}

\end{theorem}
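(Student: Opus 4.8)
The plan is to carry out the paralinearization and symmetrization of Alazard--Burq--Zuily \cite{ABZcap}, keeping track of every error term with the refined H\"older bookkeeping of de Poyferre--Nguyen. \textbf{Step 1: paralinearize the constitutive operators.} First I would paralinearize the Dirichlet-to-Neumann map: straightening the fluid domain by a change of variables, solving the resulting variable-coefficient elliptic problem variationally, and invoking tame elliptic estimates in the spirit of Alazard--M\'etivier, one obtains
\[
G(\eta)\psi = T_\lambda\big(\psi - T_B\eta\big) - T_V\cdot\nabla\eta + g_1,
\]
with $\lambda$, $V$, $B$ as in the statement and $g_1$ gaining a derivative over the leading term. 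Next, using Bony's decomposition $fg = T_f g + T_g f + R(f,g)$ and the smoothing $R\colon H^\sigma\times W^{\rho,\infty}\to H^{\sigma+\rho}$ for $\rho>0$, I would write $H(\eta) = -T_\ell\eta + g_2$ and expand the quadratic nonlinearities $\frac12|\nabla\psi|^2$ and $\frac12(1+|\nabla\eta|^2)^{-1}(\nabla\eta\cdot\nabla\psi + G(\eta)\psi)^2$ into paraproduct transport-type pieces plus $H^s$-controlled remainders. The hypotheses $r>2$ and $r<s-\frac{d}{2}+\frac12$ are precisely what make these remainder, product, and paracomposition estimates close: in each bilinear term one factor may be charged the Sobolev norm ($H^{s+1/2}$ for $\eta$, $H^s$ for $\psi$) and the other only a H\"older norm.

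\textbf{Step 2: good unknown and symmetrization.} I would then introduce Alinhac's good unknown $\zeta := \psi - T_B\eta$, which absorbs the dangerous sub-principal contributions stemming from the $\eta$-dependence of $\lambda$, at the cost of a commutator $[\partial_t + T_V\cdot\nabla, T_B]\eta$ that is handled through the equations together with H\"older control of positive order on $B$ and its derivatives (where $r>2$ is used). The system \eqref{zak} then takes the schematic form
\[
\partial_t\eta + T_V\cdot\nabla\eta - T_\lambda\zeta = \tilde g_1,\qquad \partial_t\zeta + T_V\cdot\nabla\zeta + T_\ell\eta = \tilde g_2 ,
\]
with $\tilde g_1,\tilde g_2$ already obeying \eqref{inhomog-est}. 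Applying $T_p$ to the first equation and $T_q$ to the second, the paradifferential symbolic calculus gives that $T_pT_\lambda$ and $T_qT_\ell$ agree with $T_\gamma T_q$ and $T_\gamma T_p$ respectively modulo operators of order $1$; hence with $\gamma=\sqrt{\ell\lambda}$ and $u = T_p\eta + iT_q\zeta$ the two equations combine into $\partial_t u + T_V\cdot\nabla u + iT_\gamma u = f$.

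\textbf{Step 3: estimate $f$.} The source $f$ collects (i) the images $T_p\tilde g_1$ and $T_q\tilde g_2$ of the paralinearization remainders, (ii) the commutators $[\partial_t,T_p]\eta$, $[\partial_t,T_q]\zeta$, $[T_V\cdot\nabla,T_p]\eta$, $[T_V\cdot\nabla,T_q]\zeta$, and (iii) the sub-principal symbol errors produced by the symmetrization. Term by term, each either carries a paraproduct remainder, hence gains regularity, or involves a symbol of strictly lower order than the leading $T_\gamma$ and $T_V\cdot\nabla$, hence lands in $H^s$; the time derivatives $\partial_t\eta,\partial_t\zeta$ are replaced using the equations and the quantities $V,B,\partial_tV,\partial_tB$ are controlled by the elliptic estimates for the Dirichlet-to-Neumann map. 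In every case the bound has exactly the form $\FF(\|\eta\|_{H^{s+1/2}},\|\psi\|_{H^s})\,(1 + \|\eta\|_{W^{r+1/2,\infty}}+\|\psi\|_{W^{r,\infty}})$, which is \eqref{inhomog-est}; note that this is a pointwise-in-$t$ estimate, so the $L^p_t$ hypothesis plays no role at this stage and enters only when one later integrates in time.

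\textbf{Main obstacle.} I expect the heart of the argument to be the paralinearization of $G(\eta)\psi$ and its remainder estimate at this low regularity: straightening the free surface, constructing the variational solution of the elliptic problem, and proving \emph{tame} Sobolev bounds for it and for the traces $V,B$ while charging only $W^{r+1/2,\infty}$ control (not $H^{s+1/2}$) to the low slot, with $r$ only just above $2$. The final bookkeeping -- checking that \emph{every} commutator and sub-principal remainder splits into one Sobolev and one H\"older factor with the H\"older index never exceeding $s-\frac{d}{2}+\frac12$ -- is lengthy but mechanical once the paralinearization is in hand. Since all of this is carried out in detail in \cite{ABZcap} and \cite{PN0}, it suffices to invoke those references.
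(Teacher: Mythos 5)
The paper does not prove Theorem~\ref{t:paralin}; it is quoted verbatim from de Poyferre--Nguyen~\cite{PN0} (itself building on Alazard--Burq--Zuily~\cite{ABZcap}) and invoked as a black box. So there is no ``in-paper'' proof for your sketch to diverge from.

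That said, your outline is a faithful reconstruction of the argument in the cited references: paralinearization of the Dirichlet-to-Neumann map after flattening the domain and proving tame elliptic estimates; Bony decomposition of the nonlinear terms; introduction of Alinhac's good unknown $\zeta = \psi - T_B\eta$ to cancel the sub-principal terms; symmetrization by $T_p$, $T_q$ and the symbol $\gamma = \sqrt{\ell\lambda}$ so that the system collapses to the scalar paradifferential equation $\partial_t u + T_V\cdot\nabla u + iT_\gamma u = f$; and then the term-by-term verification that every remainder and commutator splits into one Sobolev factor and one H\"older factor of index below $s - d/2 + 1/2$. You also correctly identify the genuinely delicate point -- the tame estimates for the DtN map and the traces $V$, $B$ that charge only the low-order $W^{r+1/2,\infty}$ H\"older norm (with $r$ barely above $2$) to the low-frequency slot -- which is precisely the refinement de Poyferre--Nguyen add over Alazard--Burq--Zuily. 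Citing those works for the details, as you do, is exactly what the paper under review does as well.

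One small imprecision worth flagging: the symbol $p$ in the symmetrizer is not simply $(1+|\nabla\eta|^2)^{-5/4}|\xi|^{1/2}$ but carries an order-$(-1/2)$ correction $F(\nabla\eta,\xi)\partial_x^\alpha\eta$ with $|\alpha|=2$; this sub-principal piece is needed so that $T_pT_\lambda \approx T_\gamma T_q$ and $T_qT_\ell \approx T_\gamma T_p$ hold to the order required, and omitting it would leave an order-$1$ error that does not close. Your ``modulo operators of order $1$'' remark implicitly relies on this correction already being built into $p$, so your account is consistent, but the structure of $p$ is precisely where that claim is earned.
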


To apply Corollary~\ref{c:var-strich-rough}, we record the regularity of the coefficients in \eqref{paradiff}. We have
\[
\|\D_\xi^{\beta}(\gamma \cdot P_\lambda) \|_{L_t^1([0, T]; L^\infty_\xi \dot C_x^{r - \half})} \leq \FF(\|\eta\|_{L_t^\infty([0, T]; L^\infty)}) \|\eta\|_{L_t^1([0, T]; C_x^{r + \half})} \cdot \lambda^{\frac32 - |\beta|}.
\]
Further, by for instance \cite[Lemma 3.8]{PN0},
\begin{equation}
\begin{aligned}
\|\D_\xi^{\beta}(V_{< \lambda} \cdot \xi \cdot P_\lambda) \|_{L_t^1([0, T]; L^\infty_\xi \dot C_x^{r - \half})} &\leq \FF(\|\eta\|_{L_t^\infty([0, T]; C_x^{r})}, \|\psi\|_{L_t^\infty([0, T]; C_x^{r - \half})})\\
&\quad \cdot (\|\eta\|_{L_t^1([0, T]; C_x^{r + \half})} + \|\psi\|_{L_t^1([0, T]; C_x^{r})})\cdot \lambda^{\frac32 - |\beta|}.
\end{aligned}
\end{equation}
Applying Corollary~\ref{c:var-strich-rough} with $r - \half$ in the place of $r$ and $m = \frac32$, we have
\[
\delta = \frac{2 - m}{2} = \frac14, \qquad \mu = \lambda^{\half} \lambda^{\frac{5 - 2r}{3 + 2r}}
\]
and
\begin{equation}
\begin{aligned}
\|u\|_{L^p([0, T];W^{s-\frac{8}{p(3 + 2r)}, q})} &\leq \FF(\|\eta\|_{L_t^\infty([0, T]; C_x^{r})}, \|\psi\|_{L_t^\infty([0, T]; C_x^{r - \half})}) \\
&\quad \cdot (\|\eta\|_{L_t^1([0, T]; C_x^{r + \half})} + \|\psi\|_{L_t^1([0, T]; C_x^{r})})\\
&\quad \cdot (\|u\|_{L^1([0, T];H^s)} + \|f\|_{L^1([0, T];H^{s - \half - \frac{5 - 2r}{3 + 2r}})}).
\end{aligned}
\end{equation}

Then by \eqref{inhomog-est}, a straightforward analysis of $u$ from definition, and Sobolev embedding, the right hand side may be rewritten as
\begin{equation}
\begin{aligned}
\|u\|_{L^p([0, T];W^{s-\frac{8}{p(3 + 2r)}, q})} &\leq \FF(\|\eta\|_{L_t^\infty([0, T]; H^{s + \half})}, \|\psi\|_{L_t^\infty([0, T]; H^s)}) \\
&\quad \cdot (\|\eta\|_{L_t^1([0, T]; C_x^{r + \half})} + \|\psi\|_{L_t^1([0, T]; C_x^{r})}).
\end{aligned}
\end{equation}
In the case $d = 1$, we choose $(p, q) = (4, \infty)$ and have 
\begin{equation}
\begin{aligned}
\|u\|_{L^4([0, T];W^{s-\frac{2}{2r + 3}, \infty})} &\leq \FF(\|\eta\|_{L_t^\infty([0, T]; H^{s + \half})}, \|\psi\|_{L_t^\infty([0, T]; H^s)}) \\
&\quad \cdot (\|\eta\|_{L_t^1([0, T]; C_x^{r + \half})} + \|\psi\|_{L_t^1([0, T]; C_x^{r})}).
\end{aligned}
\end{equation}
In the case $d \geq 2$, we choose $p = 2 + \eps$ and have by Sobolev embedding
\begin{equation}
\begin{aligned}
\|u\|_{L^{2 + \eps}([0, T];W^{s - \frac{d}{2} + \frac{2r - 1}{2r + 3} - \eps, \infty})} &\leq \FF(\|\eta\|_{L_t^\infty([0, T]; H^{s + \half})}, \|\psi\|_{L_t^\infty([0, T]; H^s)}) \\
&\quad \cdot (\|\eta\|_{L_t^1([0, T]; C_x^{r + \half})} + \|\psi\|_{L_t^1([0, T]; C_x^{r})}).
\end{aligned}
\end{equation}

Combined with a priori energy estimates of \cite{PN0}, we obtain the desired well-posedness result (see also the discussion at the end of \cite{Nsharp}).

\bibliography{bib-cap}
\bibliographystyle{plain}

\end{document}